\newtheorem{lemma}{Lemma}
\newtheorem{proposition}{Proposition}
\newtheorem{remark}{Remark}
\begin{document}

\title{Autonomous Demand Side Management based on Energy Consumption Scheduling and Instantaneous Load Billing: An Aggregative Game Approach
\thanks{This work of He (Henry) Chen was supported by International Postgraduate Research Scholarship (IPRS), Australian Postgraduate Award (APA), and Norman I Price Supplementary scholarship.}
\thanks{H. Chen, Y. Li, and B. Vucetic are with School of Electrical and Information Engineering, The University of Sydney, Sydney, NSW 2006, Australia (email: he.chen@sydney.edu.au, yonghui.li@sydney.edu.au, branka.vucetic@sydney.edu.au).}
\thanks{R. Louie is with the Electronic and Computer Engineering, Hong Kong University of Science and Technology, Clear Water Bay, Hong Kong (email:eeraylouie@ust.hk).}
}

\author{He~(Henry)~Chen,~\IEEEmembership{Student Member,~IEEE,} Yonghui Li,~\IEEEmembership{Senior Member,~IEEE,} \\
Raymond~H.~Y.~Louie,~\IEEEmembership{Member,~IEEE,} and Branka Vucetic,~\IEEEmembership{Fellow,~IEEE}
}


\maketitle

\begin{abstract}
In this paper, we investigate a practical demand side management scenario where the selfish consumers compete to minimize their individual energy cost through scheduling their future energy consumption profiles. We adopt an instantaneous load billing scheme to effectively convince the consumers to shift their peak-time consumption and to fairly charge the consumers for their energy consumption. For the considered DSM scenario, an aggregative game is first formulated to model the strategic behaviors of the selfish consumers. By resorting to the variational inequality theory, we analyze the conditions for the existence and uniqueness of the Nash equilibrium (NE) of the formulated game. Subsequently, for the scenario where there is a central unit calculating and sending the real-time aggregated load to all consumers, we develop a one timescale distributed iterative proximal-point algorithm with provable convergence to achieve the NE of the formulated game. Finally, considering the alternative situation where the central unit does not exist, but the consumers are connected and they would like to share their estimated information with others, we present a distributed synchronous agreement-based algorithm and a distributed asynchronous gossip-based algorithm, by which the consumers can achieve the NE of the formulated game through exchanging information with their immediate neighbors.
\end{abstract}

\begin{IEEEkeywords}
Smart grid, demand side management, aggregative game, Nash equilibrium, distributed iterative proximal-point method, distributed agreement (consensus) method, distributed gossip-based algorithm.
\end{IEEEkeywords}

\IEEEpeerreviewmaketitle

\section{Introduction}
Recently, demand side management (DSM) has emerged as one of the key techniques to transform today's aging power grid into a more efficiently and more reliably operated smart grid  \cite{Ipakchi_PEM_2009,Fang_CST_2012}. Thanks to the two-way communication capabilities of smart grid, real-time pricing \cite{Samadi_SGC_2010} has been regarded as a promising technique to implement DSM due to its ability to effectively convince consumers to shift their peak-time energy consumption to non-peak time. In real-time pricing schemes, the energy price for a certain operation period is normally designed to be proportional to the aggregated load of all consumers during the considered period \cite{Samadi_SGC_2010,Mohsenian-Rad_TSG_2010,Baharlouei_TSG_2013,Atzeni_2013_TSG}. As a result, the consumers would prefer to consume more energy during non-peak times rather than peak times in order to decrease their energy cost. This can improve the operation efficiency of the whole grid since its demand is flattened.

In a real-time pricing based DSM framework, the billing mechanism (i.e., how to charge the consumers for their energy usage) is of great importance since it may significantly affect the consumers' motivation to participate in the DSM program. However, there has only been limited work investigating this important billing issue. \cite{Mohsenian-Rad_TSG_2010} proposed a simple billing approach, where the consumers were charged in proportional to their total energy consumption for the next operation period. This total load billing method can minimize the whole grid energy cost. However, the consumers are charged the same amount  if they consume the same total amount of electricity, regardless in peak or off-peak times, which leads to unfair charging for the consumers who use less electricity in peak times \cite{Baharlouei_TSG_2013}. To address this problem, \cite{Baharlouei_TSG_2013, Atzeni_2013_TSG} proposed a new billing approach, where each consumer is charged based on his/her instantaneous load in each time slot during the next operation period. As a result, the consumers will be charged more if they consume more during peak times and this can effectively improve the fairness of charging between different consumers \cite{Baharlouei_TSG_2013}. In this paper, the billing approach proposed in \cite{Baharlouei_TSG_2013, Atzeni_2013_TSG} is termed as instantaneous load billing, in contrast to the total load billing in \cite{Mohsenian-Rad_TSG_2010}. Based on the proposed billing approach, \cite{Atzeni_2013_TSG} also developed a classical non-cooperative game for the DSM scenario where the traditional consumers as well as consumers owing distributed energy sources and/or energy storage compete to reduce their energy bills. However, the main analysis and results in \cite{Baharlouei_TSG_2013,Atzeni_2013_TSG} are only valid when the energy price is a \emph{linear} function of the total load of all consumers in each time slot. Very recently, \cite{Atzeni_TSP_2013} extended \cite{Atzeni_2013_TSG} to the scenario with a \emph{general} energy price function. Based on the proximal decomposition method \cite{Scutari_Springer_2012}, synchronous and asynchronous algorithms were respectively developed in \cite{Atzeni_2013_TSG} and \cite{Atzeni_TSP_2013} for the consumers to achieve their optimal strategies in a distributed manner.

In this paper, we develop three novel distributed algorithms for autonomous DSM scenario, which enable the selfish consumers to optimize their own energy payment through scheduling their future energy consumption. The key contributions of this paper, with a particular emphasis on the differences with \cite{Atzeni_2013_TSG, Atzeni_TSP_2013}, are summarized as follows:

\textbf{(1)} Inspired by \cite{Baharlouei_TSG_2013,Atzeni_2013_TSG,Atzeni_TSP_2013}, we adopt the instantaneous load billing scheme to effectively convince the consumers to shift their peak-time energy consumption and fairly charge the consumers. In this paper, we are interested in a practical \emph{polynomial} energy price model instead of the general energy price model considered in \cite{Atzeni_TSP_2013}, since the polynomial model has been widely adopted in power systems (e.g., spot market price model \cite{Wang_conf_2010, Fan_2012_TSG}). By exploring the aggregative property of the instantaneous load billing scheme that the energy cost of each consumer only depends on its own and all consumers' aggregated energy consumption profiles \cite{Atzeni_2013_TSG, Atzeni_TSP_2013}, we develop a novel aggregative game\footnote{An aggregative game is a special kind of the non-cooperative game where each player's payoff is parameterized by its own action and the aggregative of the actions taken by all players \cite{Jensen_J_2010,Martimort_J_2012}\cite[Ch. 4]{Koshal_Thesis_2013}.} to model the strategic behaviors of the selfish consumers.
Additionally, we perform new theoretical analysis for the Nash equilibrium (NE) of the formulated game. This analysis will be facilitated by using advanced variational inequality theory \cite{Facchinei_VI_book}. As shown in this paper, the formulation of the aggregative game can facilitate the game analysis, the algorithm design and the convergence proof for the proposed algorithms. In our previous work \cite{Chen_ICC_2013}, a distributed and parallel gradient projection algorithm was proposed for the considered DSM framework. 

\textbf{(2)} For the algorithm design, we first consider the same setup as in \cite{Atzeni_2013_TSG,Atzeni_TSP_2013} where a central unit exists and broadcasts the real-time aggregated energy consumption profile to all consumers. In this case, the synchronous and asynchronous proximal decomposition algorithms proposed in \cite{Atzeni_2013_TSG,Atzeni_TSP_2013} can be directly applied to compute the NE of the formulated game. It should be noted that the algorithms in \cite{Atzeni_2013_TSG,Atzeni_TSP_2013} are \emph{two timescale}, which is due to the nature of the problem (i.e., the mapping function associated with distributed generation and storage is monotone) in \cite{Atzeni_2013_TSG,Atzeni_TSP_2013}. However, as shown later, the formulated problem in this paper can be guaranteed to possess strictly monotone mapping. Thus, we may not need to apply the two timescale algorithms, which are generally harder to implement in online settings than the \emph{one timescale} algorithms \cite{Kannan_CDC_2010}. Motivated by this, we develop a distributed iterative proximal-point algorithm to achieve the NE of the formulated game. This new algorithm is a parallel and one timescale algorithm and the choice of algorithm parameters does not depend on the system arguments.

\textbf{(3)} Considering the alternative situation without a central unit but where the consumers are connected and they exchange their estimated information with others, we develop a distributed agreement (consensus)-based algorithm, by which the consumers can achieve the NE of the formulated game through exchanging information with their immediate neighbors. Although information exchanges are required between the consumers in this algorithm, no private information (e.g., the exact energy consumption profile of each consumer) is shared between the consumers, thus effectively protecting the consumers' privacy. Moreover, the parameters of this algorithm can also be chosen without knowing the system's arguments a priori.

\textbf{(4)} Although the central unit is not necessary for the aforementioned agreement-based algorithm, synchronization between the consumers and coordination in terms of algorithm step sizes are still required, which are challenging in very large networks. Motivated by this, we develop a distributed asynchronous gossip-based algorithm for computing the NE of the formulated game without the need of a central unit. In this developed algorithm, synchronization is not required between the consumers. Besides the asynchronous updates, the consumers are allowed to use uncoordinated step sizes that are based on the frequency of the consumer update. Note that although the distributed consensus and gossip algorithms are well-known techniques, their application to achieve the NE of the formulated game is not straightforward at all and is not feasible without the formulation of the aggregative game in this paper.

{\bf \emph{Notations}}: All the vectors, except as specially stated, are column vectors. ${\bf{x}}^T$ and ${\left\| {\bf{x}} \right\|_2} = \sqrt {{{\bf{x}}^T}{\bf{x}}} $ denote the transpose and Euclidean norm of a vector $\bf{x}$, respectively. $A \times B$ is the cartesian product of sets $A$ and $B$. ${\bf{x}} = \left( {{{\bf{x}}_n}} \right)_{n = 1}^N $ denotes the operation of concatenating all vectors ${{{\bf{x}}_1}, \ldots ,{{\bf{x}}_N}} $ into a single column vector, i.e., ${\bf{x}}  = {\left( {{\bf{x}}_1^T, \ldots ,{\bf{x}}_N^T} \right)^T}$. To emphasize the $n$-th element within ${\bf{x}}$, we sometimes write $\left({\bf x}_n,{\bf x}_{-n}\right)$ instead of $\bf x$ with ${\bf x}_{-n} = \left( {{{\bf{x}}_m}} \right)_{m = 1,m \neq n}^N$. We use ${\left[ {\;\cdot\;} \right]_{\mathcal K}}$ to denote the Euclidean projection operator onto a set $\mathcal K$. ${\nabla _{\bf{x}}}f\left( {\bf{x}} \right)$ and $\nabla _{\bf{x}}^2f\left( {\bf{x}} \right)$ respectively denote the gradient vector and Hessian matrix of a scalar function $f\left( {\bf{x}} \right)$, while ${\bf{J}}_{\bf x}{\bf{F}}\left( {\bf{x}} \right)$ denotes the Jacobian matrix of a vector function ${\bf{F}}\left( {\bf{x}} \right)$.

The rest of this paper is organized as follows. The system model and the instantaneous load billing scheme are described in Section II. Section III formulates the aggregated game and analyzes the existence and uniqueness for the NE of the formulated game. The three new distributed algorithms are proposed in Section IV-VI, respectively. In section VII, numerical results are presented to illustrate and validate the theoretical analysis. Finally, Section VIII concludes this paper.

\section{System Model}

We consider an electricity network comprised of $N$ consumers, which are served by a common energy provider. We denote the set of these consumers as $\mathcal{N} = \{1,\ldots,N\}$. Each consumer is equipped with an energy management controller unit, which has full responsibility for scheduling the consumer's energy consumption. In addition, there exists a two-way communications network connecting each consumer to the energy provider. Similar to \cite{Mohsenian-Rad_TSG_2010,Samadi_TSG_2012}, we assume that the energy requirement of each consumer is determined in advance for $H$ future time slots. Each time slot can represent different timing horizons, e.g., one hour of a day.

\subsection{Energy Consumption Model}
We consider an energy consumption model as in \cite{Samadi_TSG_2012}, where the $n$th ($n\in {\mathcal N}$) consumer's energy consumption profile can be formulated as
\begin{equation}
{{\bf{q}}_n} = {\left( {q_n^1, \ldots ,q_n^H} \right)^T},
\end{equation}
where ${q_n^h}$ is the energy consumption of consumer $n$ in the $h$th time slot and it is subject to the following constraints:
\begin{equation}\label{const_1}
q_{n}^{h,\min} \le q_{n}^h \le q_{n}^{h,\max}\;{\rm{and}}\;
\sum\nolimits_{h = 1}^H {q_n^h = } {E_n},
\end{equation}
where $q_{n}^{h,\min}$ and $q_{n}^{h,\max}$ denote consumer $n$'s minimum and maximum energy levels\footnote{Note that the minimum and maximum energy levels can be estimated in practice by sophisticated predictive techniques, such as machine learning and stochastic signal processing \cite{Samadi_TSG_2012}. Moreover, the approaches presented in this work can be easily extended to the appliance-level energy consumption model \cite{Mohsenian-Rad_TSG_2010}.} in time slot $h$, respectively, and $E_n$ is the total energy requirement of consumer $n$ over all time slots. Therefore, the individual feasible energy consumption set of consumer $n$ can be expressed as
\begin{equation}\label{actionset}
\begin{split}
{\mathcal{Q}_n} = &\left\{ {{\bf{q}}_n}:\sum\nolimits_{h = 1}^H {q_n^h = } {E_n},\;{\rm and}\right.\\
&\;\;\;\left. q_n^{h,\min } \le q_n^h \le q_n^{h,\max },\;\forall h \in {\mathcal H} \right\},
\end{split}
\end{equation}
where ${\mathcal H} = \left\{ {1, \ldots ,H} \right\}$ is the set of all future $H$ time slots. The feasible energy consumption set of all consumers can thus be expressed as
\begin{equation}\label{eq:action_set_no_constraints}
 {\cal Q}  = { {\cal Q} _1} \times  \ldots  \times { {\cal Q} _N}.
\end{equation}

\subsection{Instantaneous Load Billing}
To effectively convince the consumers to shift their peak-time energy consumption and fairly charge the consumers for their energy consumption, we adopt the instantaneous load billing scheme \cite{Baharlouei_TSG_2013,Atzeni_2013_TSG,Atzeni_TSP_2013}, where the energy price (the cost of one unit energy) of a certain time slot is set as an increasing and smooth function of the total demand in that time slot, and the consumers are charged based on the instantaneous energy price as well as the energy amount they consume in each time slot. Instead of the general price model\cite{Atzeni_TSP_2013}, we focus on a practical and specific polynomial energy price model in this paper, which has been widely adopted in power systems (e.g., the spot market price model \cite{Wang_conf_2010, Fan_2012_TSG}). Specifically, the energy price of the $h$th time slot is given by:
\begin{equation}\label{f_price}
{p_h}\left( {{L_h}} \right) = a_h {\left( L_h \right)^{b_h}} + c_h,
\end{equation}
where $a_h,~b_h,~c_h$ are time slot-specific parameters with $a_h >0$, $b_h \ge 1$, $c_h \ge 0$, and $L_h$ is the total energy consumed by all consumers in time slot $h$.
It is should note that the price function in (\ref{f_price}) can readily account for the important characteristics of energy prices that are needed for DSM in smart grid. For example, the increasing and convex price function ensures that the energy price will grow more rapidly as the aggregated load increases. This can effectively convince the consumers to shift their peak-time consumption to non-peak hours, thereby flattening the overall demand curve and reducing the need for carbon-intensive and expensive peaking power plants. Therefore, the considered energy price model can improve the efficiency of the energy provider, and motivate and engage the energy provider to enforce such price model.

Follow the adopted energy price model, the total energy cost for consumer $n$ over all future $H$ time slots can thus be given~by:
\begin{equation}\label{eq:total_energy_cost}
{\mathcal B}_n\left( {{{\bf{q}}_n},{{\bf{q}}_ {-n} }} \right) = \sum\nolimits_{h = 1}^H {\left[ {{p_h}\left( { \sum\nolimits_{m=1}^N {q_m^h} } \right)q_n^h} \right]},
\end{equation}
where ${{\bf{q}}_{ - n}} = \left( {{{\bf{q}}_m}} \right)_{m = 1,m\neq n}^N $ denotes the $(N-1)H \times 1$ vector of all consumers' energy consumption profiles, except the $n$th one. This is in contrast to the total load billing method in \cite{Mohsenian-Rad_TSG_2010}, where the energy payment of the $n$th consumer is calculated~by
\begin{equation}\label{eq:total_load_billing}
{{\cal B}_n^{{\rm{TLB}}}}\left( {{{\bf{q}}_n},{{\bf{q}}_{ - n}}} \right) = \frac{{{E_n}}}{{\sum\nolimits_{m = 1}^N {{E_m}} }}\sum\limits_{h = 1}^H {\left[ {{p_h}\left( {\sum\limits_{m = 1}^N {q_m^h} } \right)\sum\limits_{m = 1}^N {q_m^h} } \right]}.
\end{equation}
It has been shown in \cite{Baharlouei_TSG_2013} that the adopted billing method in (\ref{eq:total_energy_cost}) is fairer than the total load billing method given in (\ref{eq:total_load_billing}). This will also be validated by the simulation results in this paper.

Note that (\ref{eq:total_energy_cost}) can be further rewritten as
\begin{align}\label{eq:total_energy_cost_1}
{{\cal B}_n}\left( {{{\bf{q}}_n},{{\bf{q}}_\Sigma }} \right) = \sum\nolimits_{h = 1}^H {\left[ {p_h\left( {q_\Sigma ^h} \right)q_n^h} \right]},
\end{align}
where ${{\bf{q}}_\Sigma } = \sum\nolimits_{m = 1}^N {{{\bf{q}}_m}}$ denotes the aggregated energy consumption profile of all consumers over future $H$ time slots and $q_\Sigma ^h = \sum\nolimits_{m = 1}^N {q_m^h}$ is the $h$th element of ${{\bf{q}}_\Sigma }$. From (\ref{eq:total_energy_cost_1}), we can see that the calculation of the total energy cost of each consumer only requires the knowledge of the aggregated energy consumption profile of all consumers (${{\bf{q}}_\Sigma }$), and that the individual consumption profile of each consumer (${{\bf{q}}_{-n} }$) is not required any more.

\section{Game Formulation and Analysis}\label{sec:game}
In this section, we formulate an aggregative game for the considered DSM scenario. By employing variational inequality theory, we then analyze the existence and uniqueness of the NE for the formulated aggregative game.
\subsection{Aggregative Game Formulation}
We consider the scenario where all consumers are selfish. In particular, each consumer aims to minimize his/her total cost through energy consumption scheduling. Mathematically, this will involve the $n$th consumer ($n \in {\mathcal N}$) solving the following optimization problem:
\begin{equation}\label{eq:opt_problem}
\begin{array}{*{20}{c}}
   ~{\mathop {\min }\limits_{{{\bf{q}}_n}} \;{{\mathcal B}_n}\left( {{{\bf{q}}_n},{{\bf{q}}_{ \Sigma}}} \right)} \\
   {{\rm{s}}{\rm{.}}\;{\rm{t}}{\rm{.}}\;{{\bf{q}}_n} \in  \mathcal{Q}_n }  \\
\end{array}.
\end{equation}

We can observe from (\ref{eq:opt_problem}) that the consumers solve optimization problems which are coupled with the aggregated energy consumption of all consumers. Hence, this energy consumption control scenario can be modeled by the following aggregative game \cite{Jensen_J_2010,Martimort_J_2012}\cite[Ch. 4]{Koshal_Thesis_2013}:
\begin{itemize}
\item \emph{{Players}}: The $N$ consumers.
\item \emph{{Actions}}: Each consumer selects its energy consumption ${\bf q}_n \in {\mathcal{Q}}_n$ to minimize his/her total energy cost.
\item \emph{{Payoffs}}: The total energy cost ${\mathcal{B}_n}\left( {{{\bf{q}}_n},{{\bf{q}}_{ \Sigma}}} \right)$ defined in (\ref{eq:total_energy_cost_1}).
\end{itemize}

For convenience, we denote this Nash equilibrium (NE) problem as $\mathcal{G} = \left\langle {\mathcal{N},\left\{ \mathcal{Q}_n  \right\},\left\{{\mathcal{B}_n}\left( {{{\bf{q}}_n},{{\bf{q}}_{ \Sigma}}} \right)\right\} }\right\rangle $. In the following subsection, we will employ variational inequality theory \cite{Facchinei_VI_book} to analyze the formulated game.

\subsection{NE Analysis}
Before proceeding, it is convenient to first present the following lemma regarding the properties of the formulated game's action sets and payoff functions:
\begin{lemma}\label{lemma:property_set_function}
For each $n = 1,\ldots,N $, the set ${\mathcal Q}_n \in \mathbb{R}^H$ is convex and compact, and each function ${\mathcal{B}_n}\left( {{{\bf{q}}_n},{{\bf{q}}_{ \Sigma}}} \right)$ is continuously differentiable in $ {{\bf{q}}_n}$. For each $n\in {\mathcal N}$ and each fixed tuple ${{\bf{q}}_{ - n}}$, the function ${{\cal B}_n}\left( {\cdot\;,\;\cdot\; + \sum\nolimits_{m = 1,m \ne n}^N {{{\bf{q}}_m}} } \right)$ is convex in ${\bf q}_n$ over the set ${\mathcal Q}_n$.
\end{lemma}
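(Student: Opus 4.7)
The plan is to verify the three claims in Lemma~\ref{lemma:property_set_function} one by one, with the convexity of $\mathcal{B}_n(\cdot,\cdot+\sum_{m\neq n}\mathbf{q}_m)$ being the only part that needs real work.

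First I would dispatch the set-theoretic claim. The feasible set $\mathcal{Q}_n$ in (\ref{actionset}) is carved out of $\mathbb{R}^H$ by the linear equality $\sum_h q_n^h = E_n$ and the $2H$ box inequalities $q_n^{h,\min}\le q_n^h\le q_n^{h,\max}$. As the intersection of a hyperplane with a finite collection of closed half-spaces, $\mathcal{Q}_n$ is convex and closed, and the box constraints bound it inside a compact hyperrectangle, so $\mathcal{Q}_n$ is compact. Continuous differentiability of $\mathcal{B}_n(\mathbf{q}_n,\mathbf{q}_\Sigma)$ in $\mathbf{q}_n$ is immediate from (\ref{eq:total_energy_cost_1}) and (\ref{f_price}), since substituting $q_\Sigma^h=q_n^h+\sum_{m\neq n}q_m^h$ expresses $\mathcal{B}_n$ as a polynomial in the components of $\mathbf{q}_n$ (the exponents $b_h\ge 1$ are allowed to be non-integer, but the map $x\mapsto x^{b_h}$ is smooth on the positive reals, which is where our consumption variables live).

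For the convexity claim I would exploit the separable structure that appears once $\mathbf{q}_{-n}$ is frozen. Setting $s_h:=\sum_{m\neq n}q_m^h$, the payoff becomes
\begin{equation*}
\mathcal{B}_n(\mathbf{q}_n,\mathbf{q}_\Sigma)=\sum_{h=1}^{H}\underbrace{\Bigl[a_h\bigl(q_n^h+s_h\bigr)^{b_h}+c_h\Bigr]q_n^h}_{=:f_h(q_n^h)},
\end{equation*}
which is a sum of univariate functions of the individual coordinates $q_n^h$. Hence the Hessian $\nabla_{\mathbf{q}_n}^2\mathcal{B}_n$ is diagonal, and it suffices to prove $f_h''(x)\ge 0$ on the non-negative reals. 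A direct differentiation gives
\begin{equation*}
f_h''(x)=2\,a_h b_h(x+s_h)^{b_h-1}+a_h b_h(b_h-1)\,x\,(x+s_h)^{b_h-2},
\end{equation*}
and every factor is non-negative under the standing assumptions $a_h>0$, $b_h\ge 1$, $x\ge 0$, $s_h\ge 0$. Convexity of $\mathcal{B}_n$ in $\mathbf{q}_n$ over $\mathcal{Q}_n$ follows.

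The only mildly delicate step is the last one: one has to be careful that the argument of $(\cdot)^{b_h}$ never becomes negative (so fractional powers stay real and smooth) and that the two contributions to $f_h''$, namely the ``own-price'' term and the curvature-of-price term, are both non-negative. The separability observation is what makes the computation painless; without it one would face an $H\times H$ Hessian with non-trivial off-diagonal structure coming from the coupling through $\mathbf{q}_\Sigma$. I do not anticipate any other obstacles, and no appeal to variational-inequality machinery is needed for this lemma.
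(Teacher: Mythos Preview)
Your proof is correct and follows essentially the same route as the paper: both arguments observe that the Hessian $\nabla_{\mathbf{q}_n}^2\mathcal{B}_n$ is diagonal (the paper writes the $h$th entry abstractly as $q_n^h p_h''(q_\Sigma^h)+2p_h'(q_\Sigma^h)$, which is exactly your $f_h''$), and then check that each diagonal entry is non-negative under $a_h>0$, $b_h\ge 1$, $q_n^h\ge 0$. You supply more detail on the convexity/compactness of $\mathcal{Q}_n$ and the smoothness of $\mathcal{B}_n$, which the paper simply declares ``evident.''
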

\begin{proof}
See Appendix \ref{appen:proof of lemma1}.
\end{proof}

Under Lemma \ref{lemma:property_set_function} and according to \cite[Prop. 1.4.2]{Facchinei_VI_book}, we have the following lemma:
\begin{lemma}\label{lemma:eq_game_VI}
The NE of the formulated game $\mathcal G$ is equivalent to the solution of the variational inequality (VI) problem\footnote{Given a subset ${\mathcal K}$ of the Euclidean $N$-dimensional space $\mathbb{R}^N$ and a mapping $\bf F$: ${\mathcal K}\rightarrow\mathbb{R}^N$, the variational inequality problem, denoted VI$\left({\mathcal K},{\bf F}\right)$, is to find a vector ${\bf x}^\ast \in {\mathcal K}$ such that ${\left( {{\bf{y}} - {{\bf{x}}^ * }} \right)^T}{\bf{F}}\left( {{{\bf{x}}^ * }} \right) \ge 0,\;\forall {\bf{y}} \in {\mathcal K}$.} denoted by VI$\left({\mathcal Q},{\bf F}\right)$ where ${\cal Q} = {{\cal Q}_1} \times  \ldots  \times {{\cal Q}_N}$ and
\begin{align}\label{eq:F_function}
{\bf{F}}\left( {\bf{q}} \right) = \left( {{{\bf{F}}_n}\left( {{{\bf{q}}_n},{{\bf{q}}_\Sigma }} \right)} \right)_{n = 1}^N,
\end{align}
with
\begin{equation}\label{eq:F_n_function}
{{\bf{F}}_n}\left( {{{\bf{q}}_n},{{\bf{q}}_\Sigma }} \right) = {\nabla _{{{\bf{q}}_n}}}{{\cal B}_n}\left( {{{\bf{q}}_n},{{\bf{q}}_\Sigma }} \right).
\end{equation}
\end{lemma}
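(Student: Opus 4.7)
The plan is to invoke the standard equivalence between Nash equilibria of a smooth convex game and solutions of a variational inequality, which is precisely [Prop. 1.4.2] of \cite{Facchinei_VI_book}. Lemma \ref{lemma:property_set_function} has already verified all the hypotheses needed to apply that proposition: each strategy set $\mathcal{Q}_n$ is a nonempty, convex, compact subset of $\mathbb{R}^H$, and for every fixed $\mathbf{q}_{-n}$ the map $\mathbf{q}_n \mapsto \mathcal{B}_n(\mathbf{q}_n, \mathbf{q}_n + \sum_{m \ne n}\mathbf{q}_m)$ is continuously differentiable and convex on $\mathcal{Q}_n$. So the work reduces to transcribing the abstract proposition in the notation of this paper.

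The key steps I would carry out are as follows. First, by the definition of NE, $\mathbf{q}^\ast \in \mathcal{Q}$ is an equilibrium of $\mathcal{G}$ if and only if, for every $n \in \mathcal{N}$, the vector $\mathbf{q}_n^\ast$ solves the convex program (\ref{eq:opt_problem}) with $\mathbf{q}_{-n} = \mathbf{q}_{-n}^\ast$. Because that program has a convex differentiable objective and a convex feasible set, by the first-order optimality condition $\mathbf{q}_n^\ast$ is optimal if and only if
\[
(\mathbf{y}_n - \mathbf{q}_n^\ast)^T \, \mathbf{F}_n(\mathbf{q}_n^\ast,\mathbf{q}_\Sigma^\ast) \ge 0, \qquad \forall\,\mathbf{y}_n \in \mathcal{Q}_n,
\]
with $\mathbf{F}_n$ defined in (\ref{eq:F_n_function}) and $\mathbf{q}_\Sigma^\ast = \sum_{m=1}^N \mathbf{q}_m^\ast$. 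Stacking these $N$ inequalities and using the cartesian product structure $\mathcal{Q} = \mathcal{Q}_1 \times \cdots \times \mathcal{Q}_N$, one obtains the joint inequality $(\mathbf{y} - \mathbf{q}^\ast)^T \mathbf{F}(\mathbf{q}^\ast) \ge 0$ for all $\mathbf{y} \in \mathcal{Q}$, i.e.\ VI$(\mathcal{Q},\mathbf{F})$. Conversely, given any solution $\mathbf{q}^\ast$ of VI$(\mathcal{Q},\mathbf{F})$, testing the inequality against $\mathbf{y} = (\mathbf{y}_n,\mathbf{q}_{-n}^\ast)$ for arbitrary $\mathbf{y}_n \in \mathcal{Q}_n$ recovers the per-player stationarity condition, which by convexity is sufficient for $\mathbf{q}_n^\ast$ to be a best response to $\mathbf{q}_{-n}^\ast$.

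The only genuinely subtle point — and what I expect to be the main issue to clarify rather than a real obstacle — is the correct interpretation of $\nabla_{\mathbf{q}_n}\mathcal{B}_n(\mathbf{q}_n,\mathbf{q}_\Sigma)$ in (\ref{eq:F_n_function}). Because $\mathbf{q}_\Sigma = \mathbf{q}_n + \sum_{m\ne n}\mathbf{q}_m$ itself contains $\mathbf{q}_n$, this symbol must be read as the \emph{total} derivative of the composition $\mathbf{q}_n \mapsto \mathcal{B}_n(\mathbf{q}_n, \mathbf{q}_n + \sum_{m\neq n}\mathbf{q}_m)$, not the partial derivative treating $\mathbf{q}_\Sigma$ as independent of $\mathbf{q}_n$. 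It is exactly this total-derivative form that enters each player's first-order condition and hence must appear in the VI. Once this convention is fixed, the equivalence is a direct application of \cite[Prop.\ 1.4.2]{Facchinei_VI_book}, and no further analysis is required.
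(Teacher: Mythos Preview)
Your proposal is correct and follows exactly the paper's approach: the paper simply states that the lemma follows ``under Lemma~\ref{lemma:property_set_function} and according to \cite[Prop.~1.4.2]{Facchinei_VI_book},'' without spelling out any further details. Your write-up merely makes explicit the first-order optimality argument underlying that citation, together with the (correct) clarification that $\nabla_{\mathbf{q}_n}\mathcal{B}_n$ must be read as the total derivative through $\mathbf{q}_\Sigma$.
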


By investigating the monotonicity property of the mapping ${\bf{F}}\left( {\bf{q}} \right)$, we can derive the following proposition:
\begin{proposition}\label{Prop:unique_NE}
If the price parameter ${b_h}$ satisfies ${b_h} < 3 + 4/\left( {N - 1} \right)$ for any $h \in {\mathcal H}$, then the formulated aggregative game admits a unique NE.
\end{proposition}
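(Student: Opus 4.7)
The plan is to invoke Lemma~\ref{lemma:eq_game_VI}, which recasts the NE question as the variational inequality VI$(\mathcal{Q}, \mathbf{F})$, and then apply the classical existence/uniqueness machinery of \cite{Facchinei_VI_book}. For \emph{existence}, by Lemma~\ref{lemma:property_set_function} the set $\mathcal{Q}$ is nonempty, convex, and compact, and $\mathbf{F}$ is continuous on $\mathcal{Q}$ since each $\mathcal{B}_n$ is continuously differentiable; hence \cite[Cor.~2.2.5]{Facchinei_VI_book} immediately yields at least one solution.

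For \emph{uniqueness}, I would prove that $\mathbf{F}$ is strictly monotone on $\mathcal{Q}$, which delivers at most one solution by \cite[Thm.~2.3.3]{Facchinei_VI_book}. Since $\mathbf{F}$ is $C^1$, a sufficient condition is that the symmetric part of the Jacobian $\mathbf{J}_\mathbf{q}\mathbf{F}(\mathbf{q})$ be positive definite for every $\mathbf{q} \in \mathcal{Q}$. Crucially, because \eqref{eq:total_energy_cost_1} decomposes additively over the $H$ time slots and $p_h(\cdot)$ depends only on the $h$th aggregate, the Jacobian is block-diagonal across slots, so it suffices to analyze the $N\times N$ block $\mathbf{A}_h$ corresponding to each $h$ separately.

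Differentiating $\mathcal{B}_n=\sum_h p_h(q_\Sigma^h)q_n^h$ twice gives
\[
\mathbf{A}_h = \alpha_h\bigl[\mathbf{I} + \mathbf{1}\mathbf{1}^T + (b_h-1)\,\boldsymbol{\lambda}_h\mathbf{1}^T\bigr],
\]
where $\alpha_h = p_h'(q_\Sigma^h) > 0$ and $\lambda_{n,h} = q_n^h/q_\Sigma^h$, so $\boldsymbol{\lambda}_h$ lies on the probability simplex. Positive definiteness of the symmetric part then amounts to
\[
\|\mathbf{x}\|^2 + (\mathbf{1}^T\mathbf{x})^2 + (b_h-1)(\mathbf{1}^T\mathbf{x})(\boldsymbol{\lambda}_h^T\mathbf{x}) > 0 \quad \text{for all } \mathbf{x} \neq \mathbf{0}.
\]
I would split $\mathbf{x} = (c/\sqrt{N})\mathbf{1} + \mathbf{y}$ with $\mathbf{1}^T\mathbf{y} = 0$, use $\boldsymbol{\lambda}_h^T\mathbf{x} = c/\sqrt{N} + (\boldsymbol{\lambda}_h - \mathbf{1}/N)^T\mathbf{y}$, and bound $\|\boldsymbol{\lambda}_h - \mathbf{1}/N\|^2 \le (N-1)/N$ (attained at a vertex of the simplex). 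Bounding the remaining cross-term by $|c|\,\|\mathbf{y}\|$ leaves a $2\times 2$ quadratic form in $(|c|, \|\mathbf{y}\|)$ whose positive definiteness reduces, after taking the worst-case $\boldsymbol{\lambda}_h$, to $(N-1)b_h^2 - 2(N+1)b_h - (3N+1) < 0$. This quadratic has discriminant $16N^2$, and its positive root is exactly $3 + 4/(N-1)$, recovering the stated bound (which is moreover sharp in the worst-case simplex vertex).

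The main obstacle is obtaining the \emph{sharp} constant $3+4/(N-1)$: a na\"ive Cauchy--Schwarz bound using only $\|\boldsymbol{\lambda}_h\|^2 \le 1$ yields the weaker threshold $b_h < 3$, and the extra slack $4/(N-1)$ emerges only by orthogonally decomposing $\mathbf{x}$ relative to $\mathbf{1}$ and exploiting that the projection of the probability vector $\boldsymbol{\lambda}_h$ onto $\mathbf{1}^\perp$ has squared norm at most $(N-1)/N$ rather than $1$.
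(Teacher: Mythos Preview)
Your argument is correct and tracks the paper's strategy closely: both invoke Lemma~\ref{lemma:eq_game_VI}, decompose the Jacobian block-diagonally over time slots, and show the symmetrized per-slot block is positive definite via the simplex bound $\|\boldsymbol{\lambda}_h\|^2\le 1$ (equivalently $\|\boldsymbol{\lambda}_h-\mathbf{1}/N\|^2\le(N-1)/N$), arriving at the identical quadratic $(N-1)b_h^2-2(N+1)b_h-(3N+1)<0$ with positive root $3+4/(N-1)$. The difference lies only in the linear-algebra step. The paper writes the symmetric part as $\sigma_h\bigl(\mathbf{z}^h\mathbf{1}^T+\mathbf{1}(\mathbf{z}^h)^T+2q_\Sigma^h\mathbf{I}\bigr)$ with $\mathbf{z}^h=q_\Sigma^h\mathbf{1}+(b_h-1)\mathbf{l}^h$, reads off the two nonzero eigenvalues of the rank-two perturbation via the closed form $\mathbf{u}^T\mathbf{v}\pm\|\mathbf{u}\|\|\mathbf{v}\|$ (citing \cite{Altman_TAC_2002}), and then bounds $\|\mathbf{z}^h\|^2\le(N-1+b_h^2)(q_\Sigma^h)^2$; you instead attack the quadratic form directly through the orthogonal split $\mathbf{x}=(c/\sqrt N)\mathbf{1}+\mathbf{y}$ and reduce to a $2\times2$ determinant test. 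Your route is slightly more self-contained (no external eigenvalue formula needed), while the paper's makes the spectral structure explicit. One remark: the ``main obstacle'' you highlight---needing the orthogonal decomposition to squeeze out the sharp constant---is specific to your quadratic-form route; the paper's eigenvalue calculation delivers the same constant automatically from the same bound $\|\boldsymbol{\lambda}_h\|^2\le 1$ without any such split.
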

\begin{proof}
See Appendix \ref{appen:proof_of_prop1}.
\end{proof}

\begin{remark}
As can be seen from Proposition \ref{Prop:unique_NE},  only a specific relationship between the exponential factor of the polynomial price function and the number of consumers is required to guarantee the uniqueness of the NE. Specifically, the exponential factor of the price function is subject to an upper bound, which is inversely proportional to the number of consumers $N$. $~~~~~~~~~~~~~~~~~~~~~~~~~~~~~~~~~~~~~~~~~~~~~~~~~~~~~~~~~~~~~~\square$
\end{remark}

One could consider to solve the aforementioned game in a centralized manner, where a central unit adopts the algorithms proposed in \cite[Ch. 12]{Facchinei_VI_book} to solve the associated VI problem. However, such an approach requires each consumer to release detailed information about their energy consumption feasible set, which may lead to consumers' privacy and security concerns. To overcome this issue, in the following sections, we will develop three different distributed algorithms to achieve the NE of the formulated aggregative game for the scenarios with and without a central unit, which calculates the aggregated load and broadcasts it to consumers in each iteration of the algorithm.

\section{Distributed Iterative Proximal-point Algorithm with A Central Unit}
In this section, we consider the same setting as in \cite{Atzeni_2013_TSG,Atzeni_TSP_2013} where there is a central unit, which can provide the consumers with the latest information of the aggregated energy consumption profile after all consumers update their individual ones. In this case, we develop a distributed iterative Proximal-point algorithm to achieve the NE of the formulated aggregative game.

Before presenting our algorithm, it is worth mentioning that the formulated game can also be solved by the synchronous and asynchronous proximal decomposition algorithms proposed in \cite{Atzeni_2013_TSG} and \cite{Atzeni_TSP_2013}, which were guaranteed to converge under some conditions on the algorithm parameters. The distributed algorithms in \cite{Atzeni_2013_TSG,Atzeni_TSP_2013} were proposed based on the proximal decomposition method \cite{Scutari_Springer_2012} and solved a sequence of regulated versions of the original problem, each of which may need a distributed iterative process in itself. This is actually a two timescale approach (i.e., the proximal method updates at a slower timescale while solutions of the regularized problems change at a faster timescale) and is generally harder to implement in online settings \cite{Kannan_CDC_2010}. Additionally, the regulation parameter of such kind of algorithms has to be chosen centrally since it is normally dependent on the system arguments. It should be noted that the two-timescale property of the algorithms in\cite{Atzeni_2013_TSG,Atzeni_TSP_2013} is due to the nature of the problem (i.e., the mapping function associated with distributed generation and storage is monotone). However, as shown in Section \ref{sec:game}, the formulated problem in this paper can be guaranteed to possess strictly monotone mapping. Thus, we may not need to apply the two timescale algorithms. Motivated by this, we present a single timescale distributed algorithm based on the iterative regulation technique \cite{Kannan_CDC_2010,Kannan_SIAM_2012}, which requires only one projection step in each iteration. This algorithm is formally described in {\bf Algorithm 1}.

\begin{table}
\begin{center}
\begin{algorithm}[H] \label{Alg1}
\begin{algorithmic}[1]
\small
\STATE Set ${t} = 1$ and each consumer $n\in{\mathcal N}$ chooses a random ${\bf q}_n{\left(1\right)}$ from their feasible set ${\mathcal Q}_n$ and sends it to the central unit. The central unit calculates ${\bf{q}}_{\Sigma}{\left( {{1}} \right)} = \sum\nolimits_{n = 1}^N {{\bf{q}}_n{\left( {{1}} \right)}} $ and broadcasts it to the consumers. Given the values of the step-size $\gamma(t)$ and the parameter $\theta >0$.
\STATE If a suitable termination criterion is satisfied: $\rm{STOP}$.
\STATE For each consumer $n \in {\mathcal N}$:
\begin{adjustwidth}{0.5cm}{0cm}
3.1: Receive ${\bf{q}}_{\Sigma}{\left( {{t}} \right)} $ from the central unit.

3.2: Update the energy consumption profile by
\begin{equation*}
\begin{split}
{{\bf{q}}_n}\left( {t + 1} \right) =& \left[ {{{\bf{q}}_n}\left( t \right) - \gamma \left( t \right)\left( {{{\bf F}_n}\left( {{{\bf{q}}_n}\left( t \right),{\bf{q}}_{\Sigma}{\left( {{t}} \right)})} \right) + } \right.} \right.\\
&{\left. {\left. {  \theta \left( {{{\bf{q}}_n}\left( t \right) - {{\bf{q}}_n}\left( {t - 1} \right)} \right)} \right)} \right]_{{{\cal Q}_n}}}.
\end{split}
\end{equation*}
3.3: Send the update ${\bf{q}}_n{\left( {{t} + 1} \right)}$ to the central unit.
\end{adjustwidth}
\STATE
$t \leftarrow t + 1$; go to $\rm{STEP}$ 2.
\end{algorithmic}
\caption{: Distributed Iterative Proximal-point Algorithm
}
\end{algorithm}
\end{center}
\end{table}

The convergence property of Algorithm 1 is summarized in the following proposition:
\begin{proposition}\label{prop:convergence_alg_1}
Assume that the condition in Proposition \ref{Prop:unique_NE} holds. Then, the sequence of the energy consumption profile $\{{\bf {q}}(t)\}$ generated by Algorithm 1 converges to the unique NE of the game ${\mathcal G}$ if the step-size $\gamma(t)$ satisfies the following:
\begin{equation}\label{eq:alg1_stepsize_condition}
\sum\nolimits_{t = 1}^\infty  {\gamma \left( t \right) = \infty } \;{\rm{and}}\;\sum\nolimits_{t = 1}^\infty  {{\gamma ^2}\left( t \right) < \infty }.
\end{equation}
\end{proposition}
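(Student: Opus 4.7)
The plan is to set up a Lyapunov recursion on $V(t) = \|{\bf q}(t) - {\bf q}^\ast\|_2^2$, where ${\bf q}^\ast$ is the unique NE guaranteed by Proposition \ref{Prop:unique_NE}, and then invoke a standard deterministic-sequence convergence lemma (the Robbins-Siegmund / Polyak lemma). The starting point is that the proof of Proposition \ref{Prop:unique_NE} in Appendix \ref{appen:proof_of_prop1} not only yields strict monotonicity of ${\bf F}$ but, under the stated bound on $b_h$, actually delivers a uniform strong-monotonicity modulus $\mu>0$, i.e.\ $({\bf q}-{\bf q}^\ast)^T({\bf F}({\bf q})-{\bf F}({\bf q}^\ast))\ge\mu\|{\bf q}-{\bf q}^\ast\|_2^2$ on $\mathcal Q$. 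Combined with the VI characterization of Lemma \ref{lemma:eq_game_VI}, which gives $({\bf q}(t)-{\bf q}^\ast)^T{\bf F}({\bf q}^\ast)\ge 0$ since ${\bf q}(t)\in\mathcal Q$, this supplies the negative drift term $-2\mu\gamma(t)V(t)$ needed to drive the recursion.

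Next I would unfold the update by introducing the shorthand ${\bf v}(t)={\bf F}({\bf q}(t))+\theta({\bf q}(t)-{\bf q}(t-1))$ and writing ${\bf q}(t+1)=[{\bf q}(t)-\gamma(t){\bf v}(t)]_{\mathcal Q}$. Using the non-expansiveness of the Euclidean projection together with ${\bf q}^\ast=[{\bf q}^\ast]_{\mathcal Q}$, one obtains
\begin{equation*}
V(t+1)\le V(t)-2\gamma(t)({\bf q}(t)-{\bf q}^\ast)^T{\bf v}(t)+\gamma(t)^2\|{\bf v}(t)\|_2^2 .
\end{equation*}
Splitting ${\bf v}(t)$ into its ${\bf F}$-part and its memory-part, the above collapses, after applying strong monotonicity and Cauchy-Schwarz, to
\begin{equation*}
V(t+1)\le (1-2\mu\gamma(t))\,V(t)+2\theta\gamma(t)\,\|{\bf q}(t)-{\bf q}^\ast\|_2\,\|{\bf q}(t)-{\bf q}(t-1)\|_2+\gamma(t)^2\|{\bf v}(t)\|_2^2 .
\end{equation*}

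The hard part, and the step that distinguishes this single-timescale scheme from a plain projected-gradient analysis, is controlling the momentum/memory term $\theta({\bf q}(t)-{\bf q}(t-1))$, because it couples the current step to the entire previous trajectory and would ordinarily be handled by the inner loop of a two-timescale algorithm. The key observation that rescues the proof is that compactness of $\mathcal Q$ (Lemma \ref{lemma:property_set_function}) together with continuity of ${\bf F}$ gives a uniform bound $\|{\bf v}(t)\|_2\le B$, and then the non-expansive-projection bound applied one step back yields $\|{\bf q}(t)-{\bf q}(t-1)\|_2\le\gamma(t-1)B$; that is, the memory perturbation is itself $O(\gamma(t-1))$ and therefore vanishes. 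Substituting this bound and using $2\gamma(t)\gamma(t-1)\le\gamma(t)^2+\gamma(t-1)^2$ absorbs the cross term into summable noise, giving a clean recursion
\begin{equation*}
V(t+1)\le (1-2\mu\gamma(t))\,V(t)+C\bigl(\gamma(t)^2+\gamma(t-1)^2\bigr)
\end{equation*}
for some constant $C$ depending on $\theta$, $B$ and the diameter of $\mathcal Q$.

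Finally I would close the argument with the deterministic step-size lemma: since $\sum_t\gamma(t)=\infty$ supplies unbounded accumulated drift $(1-2\mu\gamma(t))$ and $\sum_t\gamma(t)^2<\infty$ makes the perturbation sequence summable, the recursion forces $V(t)\to 0$, i.e.\ ${\bf q}(t)\to{\bf q}^\ast$. The monotonicity modulus $\mu$ is the one coming from the proof of Proposition \ref{Prop:unique_NE} and its positivity is precisely what lets us avoid the regularization-parameter tuning (and the two-timescale structure) required in \cite{Atzeni_2013_TSG,Atzeni_TSP_2013}.
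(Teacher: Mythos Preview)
Your approach is genuinely different from the paper's. The paper does not carry out any Lyapunov recursion; instead, it appeals to the general convergence theorem for iterative proximal-point schemes in \cite[Sec.~3]{Kannan_SIAM_2012} and simply verifies that the hypotheses of that theorem (compactness of $\mathcal Q$, boundedness of $\|{\bf q}\|_2$ and $\|{\bf F}({\bf q})\|_2$, strict monotonicity of ${\bf F}$, and Lipschitz continuity of ${\bf F}$ on $\mathcal Q$) are met. The only nontrivial verification is Lipschitz continuity, which the paper establishes componentwise from the explicit form $f_n^h(q_n^h,q_\Sigma^h)=a_h(q_\Sigma^h)^{b_h-1}(b_hq_n^h+q_\Sigma^h)+c_h$. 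So the paper's proof is essentially a reduction plus a Lipschitz calculation, whereas yours is a self-contained quantitative argument; your route is more transparent about \emph{why} the memory term is harmless, but it leans on stronger structure than the paper actually invokes.

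That stronger structure is also where you have a gap. You assert that Appendix~\ref{appen:proof_of_prop1} ``actually delivers a uniform strong-monotonicity modulus $\mu>0$'', but the eigenvalue bound proved there is $\eta_{\min}\ge a_h b_h (q_\Sigma^h)^{b_h-1}\min(\kappa_h,2)$, which tends to $0$ as $q_\Sigma^h\to 0$ whenever $b_h>1$. Nothing in the model forces $q_\Sigma^h$ to be bounded away from zero on $\mathcal Q$ (the constraints allow $q_n^{h,\min}=0$), so Appendix~\ref{appen:proof_of_prop1} only gives \emph{strict} monotonicity, not strong. Without $\mu>0$ your recursion loses its $(1-2\mu\gamma(t))$ contraction factor and the final Robbins--Siegmund step, as you have written it, no longer yields $V(t)\to 0$. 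You can repair this in one of two ways: either add the mild assumption $\sum_n q_n^{h,\min}>0$ for every $h$ (then compactness plus continuity of the eigenvalue bound does give a uniform $\mu>0$), or keep only strict monotonicity and run the standard two-stage argument---use Robbins--Siegmund to get $V(t)\to V_\infty$ and $\sum_t\gamma(t)\,({\bf q}(t)-{\bf q}^\ast)^T{\bf F}({\bf q}(t))<\infty$, extract a subsequence with $({\bf q}(t_k)-{\bf q}^\ast)^T{\bf F}({\bf q}(t_k))\to 0$ from $\sum_t\gamma(t)=\infty$, and use strict monotonicity plus compactness to conclude $V_\infty=0$. Either fix is short, but as written the strong-monotonicity premise is not justified.
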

\begin{proof}
See Appendix \ref{appen:proof of prop2}.
\end{proof}

As shown above, Algorithm 1 can converge to the NE of the formulated game when there is a central unit that calculates the aggregated energy consumption profile ${\bf q}_{\Sigma}$ and broadcasts it to all consumers in each iteration. However, the developed Algorithm 1 and the algorithms in \cite{Atzeni_2013_TSG, Atzeni_TSP_2013} cannot be directly implemented for situations where the central unit does not exist, in which case the consumers thus do not have ready access to the aggregated energy consumption profile. Motivated by this issue, we will develop a distributed synchronous agreement-based algorithm and a distributed asynchronous gossip-based algorithm to achieve the NE of the formulated game in the following sections.

\section{Distributed Synchronous Agreement-based Algorithm without A Central Unit}\label{sec:alg_2}

In this section, we consider an alternative scenario where the central unit does not exist, but the consumers are connected in some manner and they are willing to share their estimated information through local communication. For this setting, we develop a distributed agrement-based algorithm, through which the consumers can achieve the NE of the game $\mathcal{G}$ via exchanging information with their immediate neighbors. In the developed algorithm, the connection topology of the consumers is modeled as an undirected (not necessarily complete) static graph. In practice, such a connection can be established through either wired or wireless communication techniques. Specifically, the connection can be implemented by employing the power line communication technique or using the resources of cellular networks to establish a virtual private network. As these techniques are widely deployed, the connection of a large number of consumers in large areas is feasible. Since only immediate connected consumers exchange information, the amount of data to be exchanged at each iteration of the developed algorithm is proportional to the numbers of connections between the consumers.

Recall that, in the formulated aggregative game, each consumer's payoff is only determined by his/her own energy consumption profile and the aggregated energy consumption profile of all consumers. Hence, the unique NE of the formulated game is achieved when the consumers reach an agreement (consensus) on the aggregated energy consumption profile. Following this equivalence and inspired by \cite[Ch. 4]{Koshal_Thesis_2013}, we develop a distributed agreement-based algorithm to achieve the unique NE of the considered aggregative game. In each iteration of this algorithm, each consumer $n \in {\mathcal N}$ executes the following three steps:
\begin{itemize}
  \item \textbf{\emph{Step 1}}: Estimate the average energy consumption of all consumers through a weighted combination of his/her own estimation and the estimation of the immediate neighbors in the last iteration. We use ${\bf{\hat q}}_n^{_{\rm A}}\left( t \right)$ to denote the average energy consumption of all consumers estimated by the consumer $n$ in the $t$th iteration. Then, the aggregated load of the whole network estimated by consumer $n$ is $N{\bf{\hat q}}_n^{_{\rm A}}\left( t \right)$.
  \item \textbf{\emph{Step 2}}: Update his/her energy consumption profile based on the estimated aggregated load through executing a Euclidean projection operation.
  \item \textbf{\emph{Step 3}}: Update his/her own estimation of the average energy consumption.
\end{itemize}

\begin{table}
\begin{center}
\begin{algorithm}[H] \label{}
\begin{algorithmic}[1]
\small
\STATE Set $t = 1$. Choose any feasible starting point ${\bf{q}}\left(1 \right) = \left( {{{\bf{q}}_n}\left( 1 \right)} \right)_{n = 1}^N$ and set ${\bf{\hat q}}_n^{_{\rm A}}\left( 1 \right)= {{{\bf{q}}_n}\left( 1 \right)}$ for every $n \in {\mathcal N}$. Given the weight parameters $w_{n,k}$ and the step-size $\alpha \left( t \right)$.
\STATE If a suitable termination criterion is satisfied: $\rm{STOP}$.\\
\STATE Each consumer $n \in {\mathcal N}$ updates his/her energy consumption profile and the estimated average energy consumption of all consumers via executing
\begin{align*}
\footnotesize
\begin{split}
&~~~~~~~{\bf{\hat q}}_n^{_{\rm A}}\left( t \right) =  {w_{n,n}}{\bf{\hat q}}_n^{_{\rm A}}\left( t \right) +\sum\nolimits_{k \in {{\cal D}_n}} w _{n,k} {\bf{\hat q}}_k^{_{\rm A}}\left( t \right),\\
&~~~~~~~{{\bf{q}}_n}\left( {t + 1} \right) = {\left[ {{{\bf{q}}_n}\left( t \right) - \alpha \left( t \right){{\bf{F}}_n}\left( {{{\bf{q}}_n}\left( t \right),N{\bf{\hat q}}_n^{_{\rm A}}\left( t \right)} \right)} \right]_{{{\mathcal Q}_n}}},\\
&~~~~~~~{\bf{\hat q}}_n^{_{\rm A}}\left( {t + 1} \right) = {\bf{\hat q}}_n^{_{\rm A}}\left( t \right) + {{\bf{q}}_n}\left( {t + 1} \right) - {{\bf{q}}_n}\left( t \right).
\end{split}
\end{align*}
\STATE $t \leftarrow t + 1$; go to $\rm{STEP}$ 2.
\end{algorithmic}
\caption{: Synchronous Agreement-based Algorithm 
}
\end{algorithm}
\end{center}
\end{table}
To proceed, it is convenient to first model the connection topology between consumers. For simplicity but without loss of generality, we model the connection topology of the consumers as an undirected static graph ${\mathcal M}\left({\mathcal N},{\mathcal E}\right)$ with ${\mathcal N}$ being the set of all consumers and ${\mathcal E}$ being the set of undirected edges among the consumers. The notation $\left\{n,k\right\}\in {\mathcal E}$ means that consumer $n$ and consumer $k$ are immediate neighbors, and ${\mathcal D}_n$ denotes the set of consumer $n$'s neighbors, i.e., ${{\cal D}_n} = \left\{ {\left. {k \in {\cal N}} \right|\left\{ {k,n} \right\} \in {\mathcal E}} \right\}$. Now we are ready to present the distributed agreement-based algorithm, which is formally described in {\bf Algorithm 2}, where the notation $w_{n,k}$ denotes the nonnegative weight that consumer $n$ assigns to the estimate of consumer $k$, which is set to zero if $k \notin {{\mathcal D}_n}$ and $n \neq k$.

In terms of the convergence of Algorithm 2, we have the following proposition:
\begin{proposition}\label{prop:converg_of_algorithm2}
Assume that the undirected graph ${\mathcal M}\left({\mathcal N},{\mathcal E}\right)$ is connected, the step-size $\{\alpha(t)\}$ is monotonically decreasing with $t$ and satisfies the following:
\begin{equation}\label{eq:step_size_cond}
\sum\nolimits_{t = 0}^\infty  {\alpha \left( t \right)}  = \infty ,\; {\rm{and}}\;{\sum\nolimits_{t = 0}^\infty  {\left[ {\alpha \left( t \right)} \right]} ^2} < \infty,
\end{equation}
and the weights adopted by the consumers meets the following\footnote{Note that the summations in the following equations are actually equivalent to that over the set ${\mathcal D}_n$. This is because that ${w_{n,k}} = 0$ if consumer $k$ is not a neighbor of consumer $n$.}:
\begin{equation}\label{eq:weight_cond}
\sum\nolimits_{k = 1}^N {{w_{n,k}} = 1}, \forall n ,\; {\rm{and}}\;\sum\nolimits_{n = 1}^N {{w_{n,k}} = 1}, \forall k,
\end{equation}
and the condition in Proposition \ref{Prop:unique_NE} holds. Then the sequence $\left\{{\bf q}(t)\right\}$ generated by Algorithm 2 converges to the unique NE of the formulated game $\mathcal G$.
\end{proposition}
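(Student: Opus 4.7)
The plan is to prove convergence by combining a consensus-error analysis for the averaging iterations with a variational-inequality analysis for the gradient-projection iterations, exploiting the strict (in fact strong) monotonicity of $\bf F$ established in the proof of Proposition~\ref{Prop:unique_NE}.

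First, I would establish the mass-conservation property that makes the algorithm sensible: since the weight matrix $W = (w_{n,k})$ is doubly stochastic by (\ref{eq:weight_cond}) and ${\bf q}_n(1) = {\bf{\hat q}}_n^{_{\rm A}}(1)$ by initialization, a short induction on $t$ using the consensus step together with the tracking update ${\bf{\hat q}}_n^{_{\rm A}}(t+1) = {\bf{\hat q}}_n^{_{\rm A}}(t) + {\bf q}_n(t+1) - {\bf q}_n(t)$ yields $\sum_{n=1}^N {\bf{\hat q}}_n^{_{\rm A}}(t) = {\bf q}_\Sigma(t)$ for every $t$. Hence the network average $\bar{\bf q}^{\rm A}(t) := N^{-1}\sum_n {\bf{\hat q}}_n^{_{\rm A}}(t)$ equals ${\bf q}_\Sigma(t)/N$, so that $N{\bf{\hat q}}_n^{_{\rm A}}(t)$ is precisely the local estimate of the true aggregated profile used as the second argument of ${\bf F}_n$.

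Second, I would control the consensus error $e_n(t) := N{\bf{\hat q}}_n^{_{\rm A}}(t) - {\bf q}_\Sigma(t)$. Compactness of each $\mathcal{Q}_n$ (Lemma~\ref{lemma:property_set_function}) makes ${\bf F}_n$ uniformly bounded on $\mathcal Q$, and nonexpansiveness of the Euclidean projection yields $\|{\bf q}_n(t+1) - {\bf q}_n(t)\| = O(\alpha(t))$. Writing the consensus iteration in matrix form and invoking the standard spectral gap of a doubly stochastic matrix associated with a connected undirected graph, a perturbed-consensus argument (in the spirit of \cite[Ch.~4]{Koshal_Thesis_2013}) shows $\|e_n(t)\| \to 0$ and, crucially, $\sum_t \alpha(t)\|e_n(t)\| < \infty$, so $\|e_n(t)\| = O(\alpha(t))$ up to a geometrically decaying transient.

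Third, I would cast the projection update as an inexact VI iteration toward the unique NE ${\bf q}^\ast$. The nonexpansiveness of the projection onto $\mathcal Q$, Lipschitz continuity of ${\bf F}$ on the compact set $\mathcal Q$ (the price function is a polynomial), and strong monotonicity of ${\bf F}$ with some constant $c>0$ (which follows from the proof of Proposition~\ref{Prop:unique_NE} under the bound on $b_h$) together give an inequality of the form
\begin{equation*}
\|{\bf q}(t+1) - {\bf q}^\ast\|^2 \le (1 - c\,\alpha(t))\|{\bf q}(t) - {\bf q}^\ast\|^2 + C_1 \alpha(t)\textstyle\sum_n \|e_n(t)\| + C_2 \alpha^2(t),
\end{equation*}
with positive constants $C_1,C_2$. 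Combining (\ref{eq:step_size_cond}) with the summability of $\alpha(t)\|e_n(t)\|$ from the previous step, a deterministic Robbins--Siegmund-type lemma delivers $\|{\bf q}(t) - {\bf q}^\ast\| \to 0$, which is the desired conclusion.

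The main obstacle will be the second step: obtaining a consensus bound sharp enough that $\sum_t \alpha(t)\|e_n(t)\|$ is finite under the step-size rule (\ref{eq:step_size_cond}). This requires careful matrix-norm bookkeeping for products of the doubly stochastic $W$ driven by the $O(\alpha(t))$ perturbations from the projection step. Once this bound is in hand, the VI contraction argument in the third step is essentially routine given the strong monotonicity certified by Proposition~\ref{Prop:unique_NE}.
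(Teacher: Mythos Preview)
Your outline is sound and, in substance, is the argument that underlies the result the paper invokes. The paper itself does \emph{not} carry out any of the three steps you describe: its proof of Proposition~\ref{prop:converg_of_algorithm2} consists of observing that the convergence of a general agreement-based scheme is proved in \cite[Ch.~4.1]{Koshal_Thesis_2013} under a list of assumptions (Assumptions~8--13 there), checking that Lemma~\ref{lemma:property_set_function}, Proposition~\ref{Prop:unique_NE}, the doubly-stochastic weights, and the step-size rule cover all but one of those assumptions, and then verifying the remaining one---Lipschitz continuity of ${\bf F}_n({\bf q}_n,\cdot)$ in the aggregate argument---by reducing to the elementwise Lipschitz bound already obtained in Appendix~\ref{appen:proof of prop2}. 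So the paper's proof is a short verification of hypotheses against an external theorem, whereas you are reconstructing that external theorem from scratch (mass conservation, perturbed-consensus decay via the spectral gap of $W$, inexact-VI contraction, Robbins--Siegmund). Your approach is more self-contained and exposes exactly where connectivity, double stochasticity, and the step-size conditions enter; the paper's approach is much shorter but opaque about the mechanism.

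One point to tighten in your Step~3: you invoke \emph{strong} monotonicity of ${\bf F}$ with a constant $c>0$ to get the $(1-c\,\alpha(t))$ contraction factor, but the proof of Proposition~\ref{Prop:unique_NE} as written only claims \emph{strict} monotonicity. To justify a uniform $c>0$ you need to argue that the eigenvalue lower bound $\eta_{\min}\ge \sigma_h\min(\kappa_h q_\Sigma^h,2q_\Sigma^h)$ is bounded away from zero on the compact set $\mathcal Q$, which in turn requires $q_\Sigma^h$ to be bounded below by a positive constant (i.e., $\sum_n q_n^{h,\min}>0$). This is natural in the application but is not stated explicitly; without it you would have to fall back on the strict-monotonicity route (bounded sequence, every limit point solves the VI, uniqueness of the NE), which still works but changes the shape of Step~3.
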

\begin{proof}
See Appendix \ref{appen:proof of prop3}.
\end{proof}
\begin{remark}
In this paper, we use the following formula for the weights \cite[Ch. 4]{Koshal_Thesis_2013}:
\begin{equation}\label{eq:weights}
{w_{n,k}} = \left\{ \begin{array}{l}
 {\tau  \mathord{\left/
 {\vphantom {\tau  {\left[ {{{\max }_n}\left| {{{\cal D}_n}} \right|} \right]}}} \right.
 \kern-\nulldelimiterspace} {\left[ {{{\max }_n}\left| {{{\cal D}_n}} \right|} \right]}}\;\;\;\;\;\;\;\;\;\;\;\;\;\;\;\;{\rm{if}}\;n \ne k \\
 1 - \left| {{{\cal D}_n}} \right|{\tau  \mathord{\left/
 {\vphantom {\tau  {\left[ {{{\max }_n}\left| {{{\cal D}_n}} \right|} \right]}}} \right.
 \kern-\nulldelimiterspace} {\left[ {{{\max }_n}\left| {{{\cal D}_n}} \right|} \right]}}\;\;\;{\rm{if}}\;n = k \\
 \end{array} \right.,
\end{equation}
where $\left| {{{\cal D}_n}} \right|$ denotes the cardinality of the set ${{{\cal D}_n}}$, and $0<\tau<1$ is used to measure the relative proportion of the neighbors' estimates in each consumer's estimation of the average energy consumption. It is straightforward to validate that the weights in (\ref{eq:weights}) satisfy the conditions in (\ref{eq:weight_cond}). Other choices of the weights can be found in \cite{Xiao_2004_J}.

Although information exchanges are required between the consumers in Algorithm 2, the consumers only need to share their estimations of the average energy consumption of all consumers instead of their exact energy consumption profiles with their immediate neighbors. Thus, the developed algorithm can avoid the consumers' security and privacy concerns. $~~\square$
\end{remark}

Note that although the central unit is not necessary for the developed Algorithm 2, synchronization between the consumers and coordination in terms of algorithm step sizes are still required, which are challenging in very large networks. Motivated by this, we will develop a distributed asynchronous algorithm in next section.

\section{Distributed Asynchronous Gossip-based Algorithm without A Central Unit}

In this section, we develop a distributed asynchronous gossip-based algorithm for computing the NE of the formulated game without the need of a central unit. The consumers perform their estimations and updates in the same way as in the Algorithm 2, but the updates occur asynchronously instead of synchronously. The developed algorithm allows the consumers to use uncoordinated step size values. More specifically, the consumers can choose the step size based on their own information-update frequency. The graph model for the connection topology of the consumers in Section \ref{sec:alg_2} is also applicable in this section.

\begin{table}
\begin{center}
\begin{algorithm}[H] \label{}
\begin{algorithmic}[1]
\small
\STATE Set $t = 1$. Choose any feasible starting point ${\bf{q}}\left( 1 \right) = \left( {{{\bf{q}}_n}\left( 1 \right)} \right)_{n = 1}^N$ and set ${\bf{\hat q}}_n^{_{\rm A}}\left( 1 \right)= {{{\bf{q}}_n}\left( 1 \right)}$ for every $n \in {\mathcal N}$. 
\STATE If a suitable termination criterion is satisfied: $\rm{STOP}$.\\
\STATE Each consumer $n \in \left\{I^t,J^t\right\}$ counts the number of updates that he/she has executed up to time $t$ inclusively (denoted by $\epsilon_n(t)$), sets the step size as $\alpha_{n}(t) = 1/\epsilon_n(t)$, and updates his/her energy consumption profile and the estimated average energy consumption of all consumers via executing
\begin{align*}
\footnotesize
\begin{split}
&~~~~~~~{\bf{\hat q}}_n^{_{\rm A}}\left( t \right) =  \frac{{\bf{\hat q}}_{I^t}^{_{\rm A}}\left( t \right) + {\bf{\hat q}}_{J^t}^{_{\rm A}}\left( t \right)}{2},\\
&~~~~~~~{{\bf{q}}_n}\left( {t + 1} \right) = {\left[ {{{\bf{q}}_n}\left( t \right) - \alpha_n \left( t \right){{\bf{F}}_n}\left( {{{\bf{q}}_n}\left( t \right),N{\bf{\hat q}}_n^{_{\rm A}}\left( t \right)} \right)} \right]_{{{\mathcal Q}_n}}},\\
&~~~~~~~{\bf{\hat q}}_n^{_{\rm A}}\left( {t + 1} \right) = {\bf{\hat q}}_n^{_{\rm A}}\left( t \right) + {{\bf{q}}_n}\left( {t + 1} \right) - {{\bf{q}}_n}\left( t \right).
\end{split}
\end{align*}
\STATE $t \leftarrow t + 1$; go to $\rm{STEP}$ 2.
\end{algorithmic}
\caption{: Asynchronous Gossip-based Algorithm 
}
\end{algorithm}
\end{center}
\end{table}

To allow for asynchronous updates, we adopt the gossip protocol \cite{Boyd_TIT_2006} to model the consumers' exchange of their estimations for the average energy consumption of all consumers. In this protocol, each consumer is assumed to have a clock which ticks according to a Poisson process with rate 1. At a tick of his/her clock, consumer $n$ contacts a randomly selected\footnote{Here, we consider that each neighbor has an equal chance of being selected.} neighbor $k \in {\mathcal D}_n$ to exchange information. With reference to \cite{Boyd_TIT_2006}, the consumers' clocks processes can be equivalently modeled as a single virtual clock that ticks according to a Poisson process with rate $N$. We assume that only one consumer communicates with its neighbor at each tick of the virtual clock and we use $Z^t$ to denote $t$th tick of the virtual Poisson process. We discretize time so that the instant $t$ corresponds to the time-slot $\left[Z^{t-1}, Z^t\right)$. At each time $t$, every consumer $n$ has his/her consumption profile ${\bf q}_n(t)$ and estimation of the average energy consumption of all consumers ${\bf{\hat q}}_n^{_{\rm A}}\left( t \right)$. Let $I^t \in {\mathcal N} $ denote the consumer whose local clock ticked at time $t$. Note that $I^t$ is uniformly distributed in the set $\mathcal N$ since the Poisson clocks at each consumer are independent. Moreover, the memoryless property of the Poisson arrival process ensures that the process $\left\{I^t\right\}$ is independent and identically distributed. We use $J^t$ to denote the consumer randomly contacted by the consumer $I^t$, where $J^t$ is a neighbor of the consumer $I^t$, i.e., $J^t \in {\mathcal D}_{I^t}$. Then, these two consumers will exchange their estimations of average energy consumption and update their own energy profiles. The developed asynchronous gossip-based algorithm is formally described in {\bf Algorithm 3}. As can be seen from Algorithm 3, the consumers perform the same updates as in the synchronous Algorithm 2, but only two randomly selected consumers update their estimations of average energy consumption and their own energy profiles at each iteration, while the other consumers do not update.

For the convergence of Algorithm 3, we have the following proposition and the proof follows from Appendixes \ref{appen:proof_of_prop1}-\ref{appen:proof of prop3}, the adopted step sizes and \cite[Ch. 4, Prop. 12]{Koshal_Thesis_2013}:
\begin{proposition}\label{prop:convergence_gossip_alg}
Assume that the condition in Proposition \ref{Prop:unique_NE} holds and the undirected graph ${\mathcal M}\left({\mathcal N},{\mathcal E}\right)$ is connected. Then, the sequence of the energy consumption profile $\{{\bf {q}}(t)\}$ generated by Algorithm 3 converges to the unique NE of the game ${\mathcal G}$ almost surely.
\end{proposition}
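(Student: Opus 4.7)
The plan is to reduce the convergence of Algorithm 3 to the machinery already developed for Propositions \ref{Prop:unique_NE}--\ref{prop:converg_of_algorithm2}, treating the pairwise gossip exchange as a random doubly-stochastic averaging operation layered on top of the stochastic-approximation argument of Appendix \ref{appen:proof of prop3}. Concretely, I would first encode each iteration by the random matrix $W(t) = I - \tfrac{1}{2}(e_{I^t}-e_{J^t})(e_{I^t}-e_{J^t})^{T}$, which is symmetric, doubly stochastic, and positive semidefinite. Standard gossip analysis \cite{Boyd_TIT_2006} then yields that $\mathbb{E}[W(t)]$ has a spectral gap strictly less than one whenever $\mathcal{M}(\mathcal{N},\mathcal{E})$ is connected, so the disagreement process $\hat{\mathbf{q}}_n^{_{\rm A}}(t) - \bar{\mathbf{q}}^{_{\rm A}}(t)$, with $\bar{\mathbf{q}}^{_{\rm A}}(t) = \tfrac{1}{N}\sum_k \hat{\mathbf{q}}_k^{_{\rm A}}(t)$, contracts geometrically in mean square. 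Combined with the fact that the identity $\bar{\mathbf{q}}^{_{\rm A}}(t) = \tfrac{1}{N}\sum_k \mathbf{q}_k(t)$ is preserved by the update rule, this establishes the asymptotic consensus on the true aggregate that underpinned Proposition \ref{prop:converg_of_algorithm2}.

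Second, I would verify the Robbins--Monro-type conditions for the uncoordinated step sizes $\alpha_n(t) = 1/\epsilon_n(t)$. Since each consumer's local clock is Poisson of rate one and the virtual clock is Poisson of rate $N$, by the strong law $\epsilon_n(t)/t \to 1/N$ almost surely for every $n \in \mathcal{N}$. Consequently, on a set of probability one, $\sum_t \alpha_n(t) = \infty$ and $\sum_t \alpha_n^2(t) < \infty$ simultaneously for all $n$, and the ratios $\alpha_n(t)/\alpha_m(t)$ remain bounded. This is exactly the uncoordinated step-size hypothesis of \cite[Ch. 4, Prop. 12]{Koshal_Thesis_2013}, which I would invoke to decouple the per-consumer update schedules from the convergence analysis.

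Third, with consensus contraction and admissible step sizes in hand, the projection update on $\mathbf{q}_n$ is structurally identical to that in Algorithm 2, so I would follow the telescoping argument in Appendix \ref{appen:proof of prop3} almost verbatim. Using the strict monotonicity of $\mathbf{F}$ on $\mathcal{Q}$ established in Appendix \ref{appen:proof_of_prop1} under the hypothesis $b_h < 3 + 4/(N-1)$, together with non-expansiveness of the Euclidean projection and the mean-square consensus estimate, one shows that $\|\mathbf{q}(t) - \mathbf{q}^*\|_2^2$ is a nonnegative almost-supermartingale whose incremental-drift term is controllable by the Robbins--Siegmund lemma. Convergence of this scalar sequence, combined with the strict-monotonicity gap at any cluster point different from $\mathbf{q}^*$, identifies the almost sure limit with the unique NE.

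The main obstacle I expect is the coupling between the two random drivers: the gossip selection $(I^t,J^t)$ which governs the consensus error, and the random counters $\epsilon_n(t)$ which determine the local step sizes. Unlike Algorithm 2, the effective per-consumer gain is now itself a random process correlated with which consumers have just communicated, so a simple conditional-expectation split does not cleanly separate the ``descent'' term from the ``consensus error'' term in the Lyapunov recursion. The delicate part is bounding this cross term; I would handle it by conditioning on the natural filtration generated by the gossip history up to time $t$, exploiting the independence of $(I^{t+1},J^{t+1})$ from the past, and then absorbing the residual into the summable $\sum_t \alpha_n^2(t)$ series. This is precisely the step for which \cite[Ch. 4, Prop. 12]{Koshal_Thesis_2013} was designed, and it is the only component of the proof that does not follow by direct reuse of Appendixes \ref{appen:proof_of_prop1}--\ref{appen:proof of prop3}.
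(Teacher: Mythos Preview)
Your proposal is correct and follows essentially the same route as the paper, which simply states that the result follows from Appendixes \ref{appen:proof_of_prop1}--\ref{appen:proof of prop3} (strict monotonicity and Lipschitz continuity of $\mathbf{F}$), the adopted step sizes, and \cite[Ch.~4, Prop.~12]{Koshal_Thesis_2013}; you are just making explicit the verification of the hypotheses that the paper leaves implicit. One minor slip: since exactly two consumers update at every virtual tick, $\epsilon_n(t)/t$ converges almost surely to a graph-dependent constant $p_n>0$ with $\sum_n p_n = 2$, not to $1/N$ in general, but this does not affect the Robbins--Monro conclusions you draw from it.
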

\begin{remark}
In this developed algorithm, synchronization is not required between the consumers. Besides the asynchronous updates, the consumers are allowed to use uncoordinated step sizes that are based on the frequency of the consumers' updates. Specifically, consumer $n$ uses the step size
\begin{equation}\label{}
 \alpha_n(t) = \frac{1}{\epsilon_n(t)},~n \in \left\{I^t,J^t\right\},
\end{equation}
at the $t$th iteration, where ${\epsilon_n(t)}$ denotes the numbers of updates that consumer $n$ has performed up to time $t$ inclusively. In addition, analogous to Algorithm 2, no private information is required to exchange between the consumers in Algorithm~3.

It is worth mentioning that the pairwise gossip protocol (i.e, only a random pair of
consumers is chosen to update at each iteration) is adopted for simplicity in this paper. The developed algorithm can be extended to the general setup where a random subset of consumers (more than one pair) exchange their estimations and update their energy profiles at each iteration. This will be considered in our future work. $\square$
\end{remark}

\section{Numerical Results}

In this section, we present some numerical results to validate the above theoretical analysis and illustrate the performance of the developed algorithms.

In the following simulation results, we consider the residential scenario consisting of $N=50$ consumers, where the consumers determine their energy consumption for the following whole day, which starts from 8 AM. Each time slot is set as one hour, i.e., $H = 24$ and the first time slot corresponds to the hour between 8 AM and 9 AM. In Fig.\ \ref{fig1}, we provide a typical energy consumption interval of a residential consumer \cite[Figs. 2.5-2.7]{Willis_book_2004},\cite{Gatsis_TSG_2012}. Considering that different consumers may have different energy consumption interval in practice, the `Low limit' and `Upper limit' of each consumer in each time slot are formed by respectively adding a random real number to the corresponding value in Fig.\ \ref{fig1}. Then, the initial energy consumption of a certain consumer in each time slot, $q_n^{h}(1)$, is uniformly chosen between his/her corresponding `Low limit' and `Upper limit'. The numerical results show that the selected consumption parameters yield the total energy consumption of every consumer in the order of 10 kWh to 30 kWh, which is representative of a residential consumer \cite{Gatsis_TSG_2012}.
\begin{figure}
\centering \scalebox{0.5}{\includegraphics{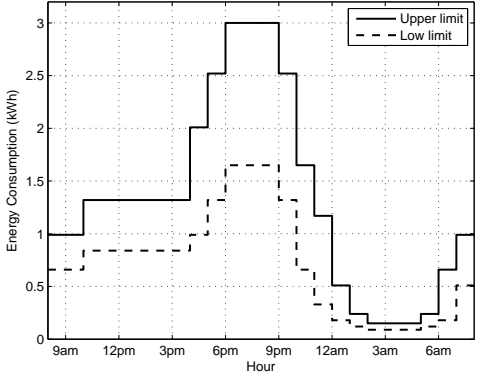}}
\caption{The typical energy consumption interval of a residential consumer. \label{fig1}}
\end{figure}
\begin{figure}
\centering \scalebox{0.5}{\includegraphics{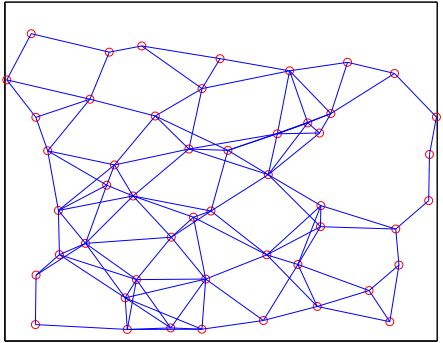}}
\caption{The connection topology for the consumers used in Algorithm 2 and 3. The consumers are denoted by the circles and their connections are represented by the solid lines. \label{fig2}}
\end{figure}

According to Fig. \ref{fig1}, we classify the whole time horizon into three segments: off-peak hours (12 AM to 7 AM), mid-peak hours (7 AM to 4 PM and 10 PM to 12 AM), and on-peak hours (4 PM to 10 PM). We also set $a_h$ equal to 0.003, 0.004 and 0.005 for the off-peak, mid-peak and on-peak hours, respectively, and parameters $b_h$ and $c_h$ are set equal to 1.2 and 0 for $\forall h \in \mathcal{H}$, respectively. In the considered DSM scenario, the value $q_n^{h,\min}$ for the $n$th consumer is set to his/her `Low limit' of the $h$th time slot. In addition, the values of $q_n^{h,\max}$ for mid-peak and on-peak hours are set to his/her maximum value of the `Upper limit', while the values of $q_n^{h,\max}$ for the off-peak hours are uniformly chosen from the interval $[0.4,0.6]$. The values of $E_n$ are chosen to be equal to the sum of the consumers' initial energy consumption profiles before applying the DSM program. Moreover, the parameters for the algorithms are chosen\footnote{We refer the readers to \cite{Kannan_SIAM_2012} for more discussion on the choice of algorithm parameters.} as follows: $\gamma \left( t \right) = {t^{ - 0.51}}$ and $\theta  = 0.2$ for Algorithm 1, and $\alpha \left( t \right) = {t^{ - 0.51}}$ and $\tau = 0.5$ for Algorithm 2. Finally, a randomly generated connection structure of the consumers for Algorithm 2 and 3 are given in Fig. \ref{fig2}, where two consumers are directly linked means that they are immediate neighbors, who can exchange information in the iterations of the algorithms.

\begin{figure*}
\centering
 \subfigure[Convergence of Algorithms 1 and 2]
  {\scalebox{0.5}{\includegraphics {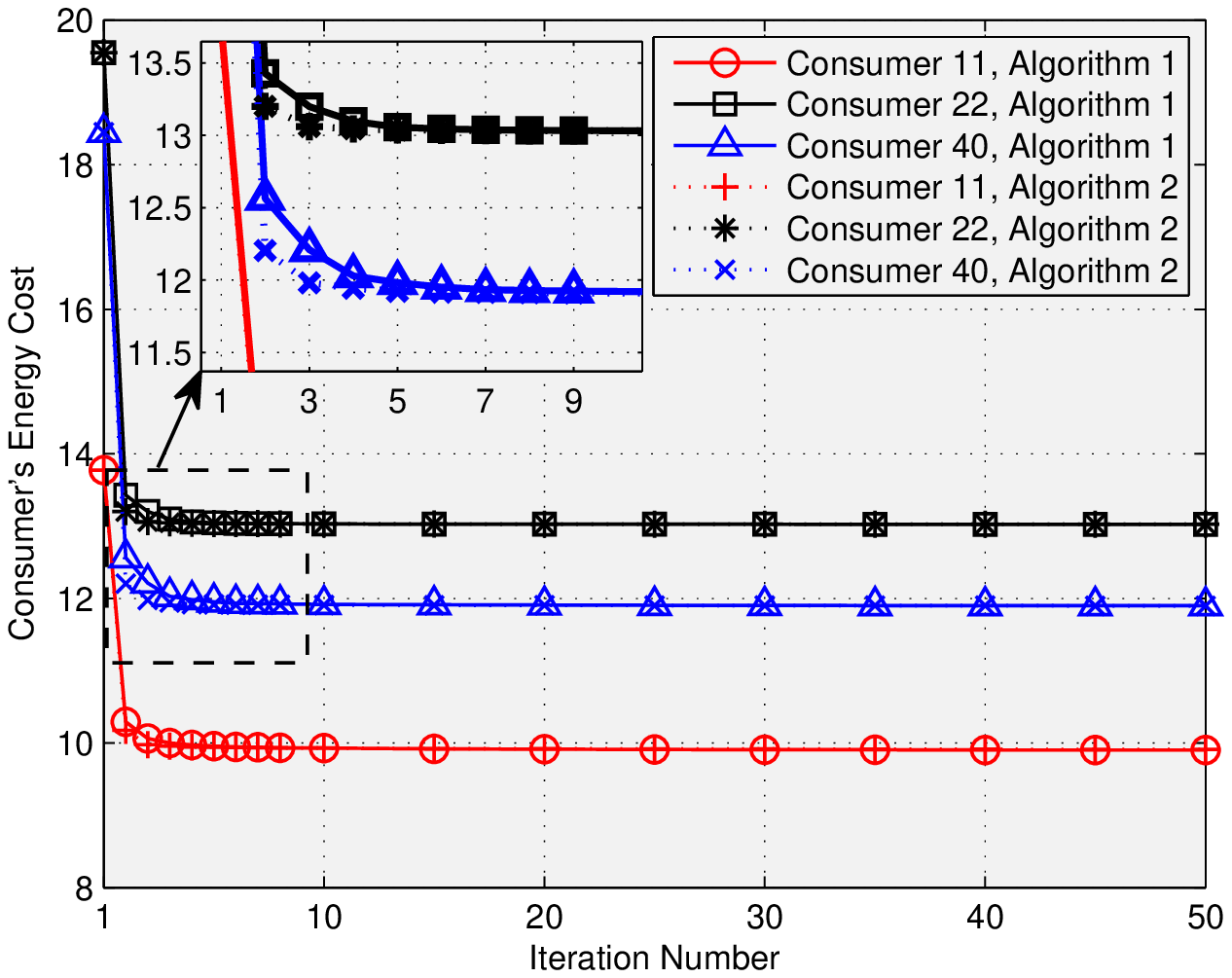}
  \label{fig3_a}}}
\hfil
 \subfigure[Convergence of Algorithm 3]
  {\scalebox{0.5}{\includegraphics {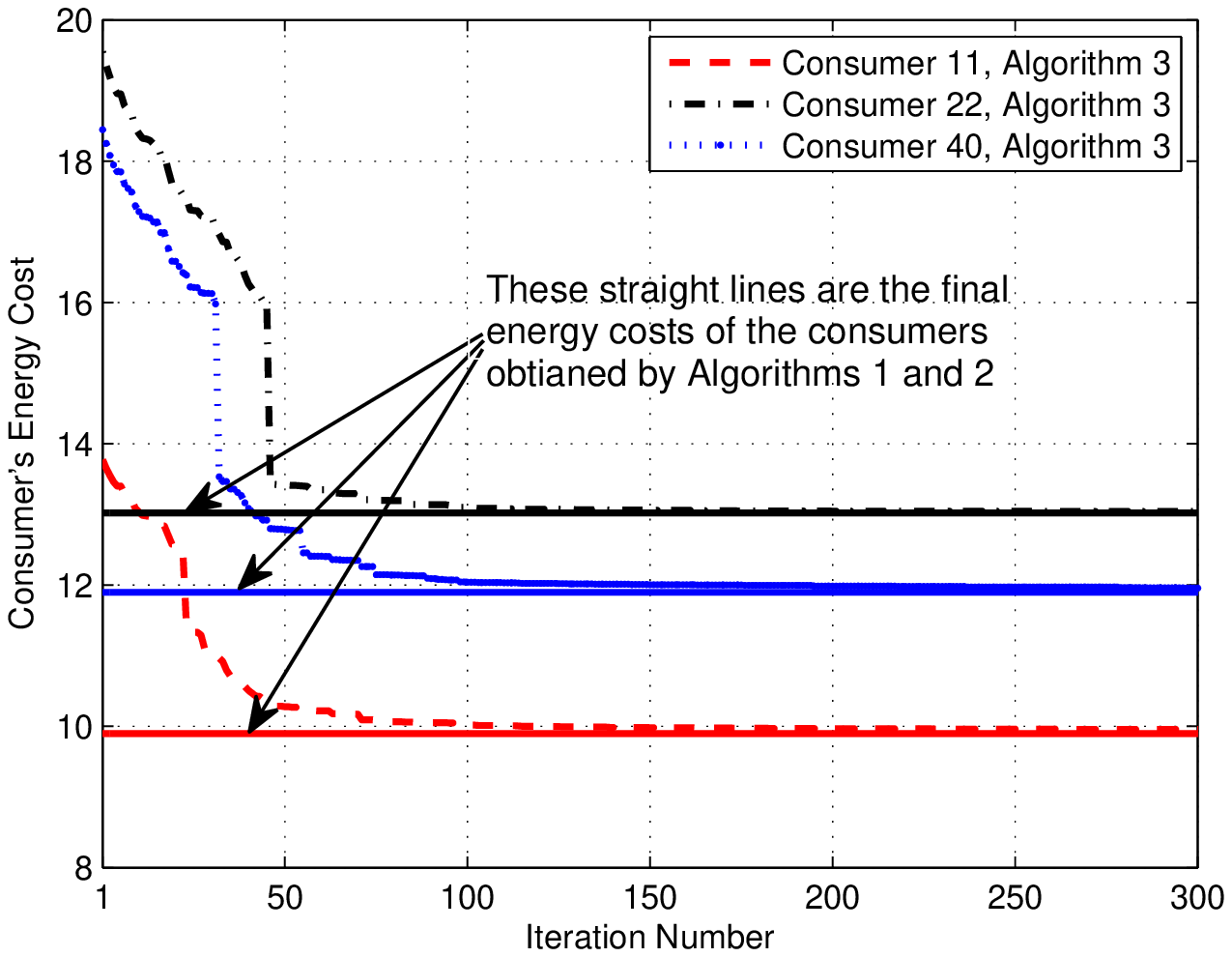}
\label{fig3_b}}}
\caption{The convergence of the developed algorithms in terms of the consumers' total energy cost.}
\label{fig3}
\end{figure*}
\begin{figure*}
\centering \scalebox{0.7}{\includegraphics{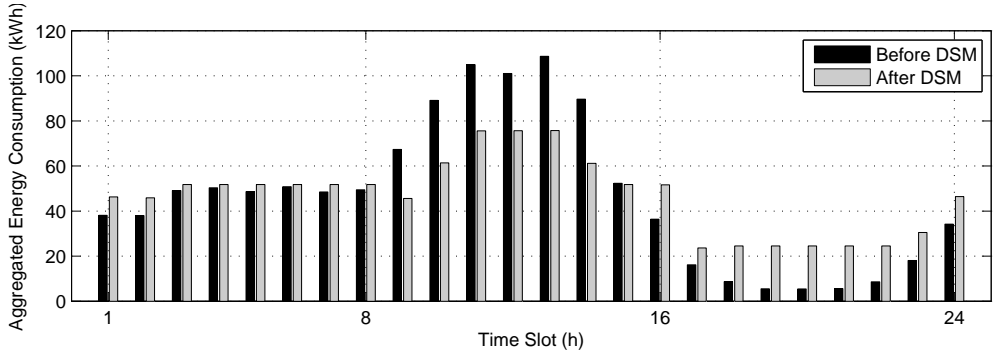}}
\caption{The aggregated energy consumption profiles of all consumers before and after the DSM program. \label{fig4}}
\end{figure*}

Fig. \ref{fig3} plots the total energy cost for three different consumers versus the number of iterations of the developed algorithms. It can be observed from Fig. \ref{fig3} (a) that both Algorithm 1 and Algorithm 2 converge to the NE of the formulated game very quickly. Specifically, the energy cost of each consumer has already  achieved a relatively stable state after the first 10 iterations, which verifies the validness of both Proposition \ref{prop:convergence_alg_1} and Proposition \ref{prop:converg_of_algorithm2}, as well as displaying the high efficiency of the developed algorithms. Fig. \ref{fig3} (b) is plotted to illustrate the convergence performance of the Algorithm 3. As can be observed from Fig. \ref{fig3} (b) that the total energy cost of different consumers approach to coincide with that obtained by Algorithm 1 and 2 after $200$ iterations. This validates the results given in Proposition \ref{prop:convergence_gossip_alg}. Note that due to space limitations, we only show results in Fig. \ref{fig3} for three randomly selected consumers, although it can be shown that similar results also hold for the other consumers and a wide range of settings with different parameters.

In Fig \ref{fig4}, we compare the aggregated energy consumption profiles of all consumers corresponding to the situations before and after DSM program. We clearly observe from Fig. \ref{fig4} that the proposed DSM scheme effectively encourages the consumers to shift their energy consumption from peak to non-peak hours. We also investigate the peak-to-average ratio (PAR) of the aggregated load defined as
\begin{equation}
{\rm{PAR}} = \frac{{H{{\max }_h}q_\Sigma ^h}}{{\sum\nolimits_{h = 1}^H {q_\Sigma ^h} }}.
\end{equation}
The simulation results show that the PAR decreases from $2.3189$ to $ 1.6161$ (i.e., $30.31$\% less) before and after the DSM program. This will result in a generally flattened demand profile, which will not only reduce the consumers' energy cost but also benefit the efficiency of the whole power grid.

To show that the adopted billing method can fairly charge the consumers, we plot the energy consumption profiles of consumer $43$ and consumer $50$ after applying the proposed DSM program in Fig. \ref{fig5}. Their total daily energy requirements are $E_{43} = 20.63$~(kWh) and $E_{50} = 19.99$~(kWh), respectively. If the total load billing method \cite{Mohsenian-Rad_TSG_2010} was used, consumer $43$ would be charged more than consumer $50$ since $E_{43} > E_{50}$. However, as can be observed from Fig. \ref{fig5}, the on-peak energy usage of consumer $50$ is larger than that of consumer $43$. This can also be reflected by the PAR values of these two consumers, i.e., ${\rm{PAR}}_{50} = 1.9438$ and ${\rm{PAR}}_{43} = 1.7863$. Thus, it may be not fair to charge consumer $43$ more than consumer $50$ simply because he/she consumes more energy totally. In contrast, our numerical results show that consumer $43$ and consumer $50$ will finally be charged ${\mathcal B}_{43} = 10.56$ and ${\mathcal B}_{50} = 10.66$ (i.e., ${\mathcal B}_{43}<{\mathcal B}_{50}$) after the proposed DSM program. This result is understandable since the adopted billing method considers not only how much the consumers consume the energy totally but also when the consumers use the energy. By this example, we show that the adopted billing approach can charge the consumers more fairly, thereby motivating the consumers to participate in the DSM program.

Fig. \ref{fig6} compares the total energy cost of all consumers for three different cases with different number of consumers. As expected, it can be observed from Fig. \ref{fig6} that the total energy cost is significantly reduced after the proposed DSM program. We also compare the performance of the proposed game-theoretical DSM program with the optimal one obtained by solving the following social welfare optimization problem:
\begin{equation}\label{eq:centralized_opt}
\begin{array}{l}
 \mathop {\min }\limits_{\left\{ {{{\bf{q}}_1}, \ldots ,{{\bf{q}}_N}} \right\}} \;\sum\nolimits_{n = 1}^N {{{\mathcal B}_n}\left( {{{\bf{q}}_n},{{\bf{q}}_{ - n}}} \right)}  \\
 \;\;\;\;\;\;\;{\rm{s}}{\rm{.t}}{\rm{.}}\;\;{{\bf{q}}_n} \in {{\mathcal Q}_n},\;\forall n \in {\mathcal N} \\
 \end{array}.
\end{equation}
From Fig. \ref{fig6}, we can observe that the total energy cost achieved by the proposed DSM program is almost the same with the optimal one\footnote{The theoretical analysis of this observation (i.e., the price of anarchy analysis for the formulated game) is out of the scope of this paper and will be considered in future work.}. Thus, we can claim that the proposed DSM framework qualifies as a practically appealing candidate for the DSM of future smart grid.
\begin{figure}[!ht]
\centering \scalebox{0.48}{\includegraphics{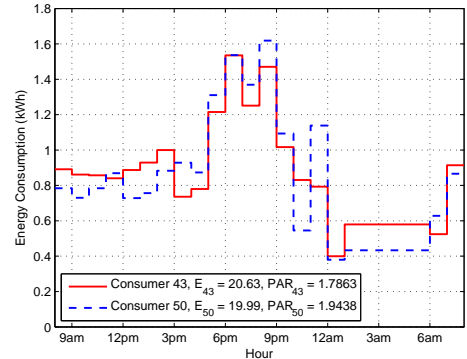}}
\caption{The energy consumption profiles of consumer 43 and consumer 50 after the DSM program. \label{fig5}}
\end{figure}
\begin{figure}[!ht]
\centering \scalebox{0.48}{\includegraphics{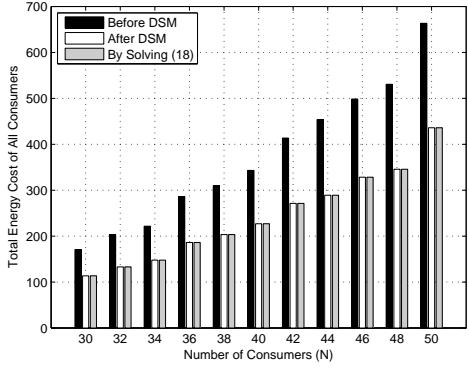}}
\caption{Comparison of total energy cost of all consumers before the DSM program, after the DSM program, and obtained by social welfare optimization. \label{fig6}}
\end{figure}

\section{Conclusions}
In this paper, we formulated an aggregative game for the demand side management program based on energy consumption scheduling and instantaneous load billing, where the consumers are selfish and compete to minimize their individual energy cost. The sufficient condition for the existence and uniqueness of Nash equilibrium (NE) of the formulated game was subsequently given and proved. Based on the formulation of the aggregative game, we developed three distributed algorithms to achieve the NE of the formulated game, corresponding to the two scenarios where the consumers can or cannot access the real-time information of the aggregated load. In these algorithms, the choice for the algorithm parameters do not depend on the arguments of the system and no private information is required to exchange between consumers. Numerical results showed that the developed algorithms can quickly converge to the NE of the formulated game and efficiently convince the consumers to shift their on-peak consumption, which are beneficial to both the consumers and the whole~grid.

\begin{appendix}
\subsection{Proof of Lemma \ref{lemma:property_set_function}}\label{appen:proof of lemma1}
It is evident that the statements in the first part of Lemma \ref{lemma:property_set_function} holds. Hence, we only need to prove the convexity of ${{\cal B}_n}\left( {\cdot\;,\;\cdot\; + \sum\nolimits_{m = 1,m \ne n}^N {{{\bf{q}}_m}} } \right)$ in ${{\bf{q}}_n}$ for every fixed ${{\bf{q}}_{-n}}$. This can be achieved by proving that the Hessian of ${{\cal B}_n}\left( {{{\bf{q}}_n},{{\bf{q}}_\Sigma }} \right)$ is positive semidefinite \cite{Boyd_Convex_2004}. After some algebraic manipulation, we have
\begin{equation}\label{Hessian}
\begin{split}
\nabla _{{{\bf{q}}_n}}^2{{\cal B}_n}\left( {{{\bf{q}}_n},{{\bf{q}}_\Sigma }} \right) = {\rm{diag}}\left\{ {\left[ {q_n^h{p_h}''\left( {{q_\Sigma ^h}} \right) + 2{p_h}'\left( {{q_\Sigma ^h}} \right)} \right]_{h = 1}^H} \right\}.
\end{split}
\end{equation}
Since (\ref{Hessian}) is a diagonal matrix with all diagonal elements being positive, the Hessian matrix of ${{\mathcal {B}}_n}\left( {{{\bf{q}}_n},{{\bf{q}}_{ - n}}} \right)$ is positive semidefinite. This completes the proof. 
\subsection{Proof of Proposition \ref{Prop:unique_NE}}\label{appen:proof_of_prop1}
Based on Lemma \ref{lemma:eq_game_VI}, the proof of this proposition follows if we can show that the formulated VI$\left({\mathcal Q},{\bf F}\right)$ in Lemma \ref{lemma:eq_game_VI} only possesses one solution.  With reference to \cite[Thm. 4.1]{Scutari_Springer_2012}\cite[Thm. 2.3]{Cominetti_book_2012}, the VI$\left({\mathcal Q},{\bf F}\right)$ admits a unique solution if the mapping ${\bf{F}}\left( {\bf{q}} \right)$ is strictly monotone\footnote{A mapping $\bf F\left( {\bf{x}} \right)$: ${\mathcal K}\ni {\bf{x}} \rightarrow\mathbb{R}^N$ is said to be \emph{strictly monotone} on ${\mathcal K}$ if $ \left( {{\bf{x}} - {\bf{y}}} \right)^T{\left( {{\bf{F}}\left( {\bf{x}} \right) - {\bf{F}}\left( {\bf{y}} \right)} \right)} > 0, ~\forall \bf x, \bf y \in {\mathcal K}\; \rm{and} \; \bf x \neq \bf y$.} over $\mathcal Q$ since the feasible set $\mathcal Q$ is compact and convex.

To prove the strict monotonicity of the mapping ${\bf{F}}\left( {\bf{q}} \right)$, it suffices to show that 
\begin{equation}\label{def_eq_1}
\begin{split}
&\sum\nolimits_{h = 1}^H {\sum\nolimits_{n = 1}^N {\left[ {\left( {q_n^h - s_n^h} \right)\left( {{\nabla _{q_n^h}}{{\mathcal B}_n}\left( {\bf{q}} \right) - {\nabla _{s_n^h}}{{\mathcal B}_n}\left( {\bf{s}} \right)} \right)} \right]} } >0 ,\\
\end{split}
\end{equation}
for any ${\bf{q}} = \left( {{{\bf{q}}_n}} \right)_{n = 1}^N,\; {\bf{s}} = \left( {{{\bf{s}}_n}} \right)_{n = 1}^N \in {\cal Q}$.

Let ${{\bf{l}}^h} = \left( {q_1^h, \ldots ,q_N^h} \right)^T$ and ${{\bf{j}}^h} = \left( {s_1^h, \ldots ,s_N^h} \right)^T$, then we can be re-write (\ref{def_eq_1}) as
\begin{equation}\label{def_eq_2}
\begin{split}
&\sum\nolimits_{h = 1}^H {\left[ {\left( {{{\bf{l}}^h} - {{\bf{j}}^h}} \right)^T\left( {{\nabla _{{{\bf{l}}^h}}}{\mathcal B}_n^h\left( {{{\bf{l}}^h}} \right) - {\nabla _{{{\bf{j}}^h}}}{\mathcal B}_n^h\left( {{{\bf{j}}^h}} \right)} \right)} \right]}>0,
\end{split}
\end{equation}
where ${\mathcal B}_n^h\left( {{{\bf{l}}^h}} \right) = {p_h}\left( {q_{\Sigma}^h} \right)q_n^h$, and ${\nabla _{{{\bf{l}}^h}}}{\cal B}_n^h\left( {{{\bf{l}}^h}} \right) = {\left( {{\nabla _{q_1^h}}{\cal B}_1^h\left( {{{\bf{l}}^h}} \right),{\nabla _{q_2^h}}{\cal B}_2^h\left( {{{\bf{l}}^h}} \right), \ldots ,{\nabla _{q_N^h}}{\cal B}_n^h\left( {{{\bf{l}}^h}} \right)} \right)^T}$.

We observe that a sufficient condition for (\ref{def_eq_2}) to hold is if
\begin{equation}\label{def_eq_6}
\left( {{{\bf{l}}^h} - {{\bf{j}}^h}} \right)^T\left[ {{\bf g}_h\left( {{{\bf{l}}^h}} \right) -{\bf g}_h\left( {{{\bf{j}}^h}} \right)} \right] > 0,\;\forall h \in {\mathcal H},
\end{equation}
where ${{\bf{g}}_h}\left( {{{\bf{l}}^h}} \right) = {\nabla _{{{\bf{l}}^h}}}{\cal B}_n^h\left( {{{\bf{l}}^h}} \right) $, which is defined for the sake of notation.

Recall the definition of a strictly monotone mapping, we can obtain that (\ref{def_eq_6}) holds if the mapping ${{\bf{g}}_h}\left( {{{\bf{l}}^h}} \right)$ is strictly monotone. With reference to \cite[Eq. (4.8)]{Scutari_Springer_2012}, the condition in (\ref{def_eq_6}) can be shown to be equivalent to proving the Jacobian matrix of ${{\bf{g}}_h}\left( {{{\bf{l}}^h}} \right)$ is positive definite. Since the transpose operation does not change the definite property of a given matrix, what we only to prove is that the transpose of the Jacobian matrix of ${{\bf{g}}_h}\left( {{{\bf{l}}^h}} \right)$, denoted by ${{\bf{G}}_h}\left( {{{\bf{l}}^h}} \right) = {\nabla _{{{\bf{l}}^h}}}{{\bf{g}}_h}\left( {{{\bf{l}}^h}} \right)$, is positive definite. 

To proceed, we have the $(n,m)$th entry of ${{\bf{G}}_h}\left( {{{\bf{l}}^h}} \right)$ after some algebraic manipulation given by
\begin{equation}\label{def_eq_7}
{\left[ {{{\bf{G}}_h}\left( {{{\bf{l}}^h}} \right)} \right]_{n,m}} = \left\{ \begin{array}{l}
\sigma_h\left[ {2{q_{\Sigma}^h} + \left( {{b_h} - 1} \right)q_n^h} \right]
,\;{\mathop{\rm if}\nolimits} \;n = m\\
\sigma_h\left[ {{q_{\Sigma}^h} + \left( {{b_h} - 1} \right)q_n^h} \right] ,\;\;\;{\mathop{\rm if}\nolimits} \;n \ne m
\end{array}\right.
\end{equation}
where $\sigma_h =  {a_h}{b_h}{\left( {{q_{\Sigma}^h}} \right)^{{b_h} - 2}}$.

Since the matrix ${{\bf{G}}_h}\left( {{{\bf{l}}^h}} \right)$ may not be symmetric, we can prove its positive definiteness by showing that the symmetric matrix
\begin{equation}\label{def_eq_8}
{{\bf{G}}_h}\left( {{{\bf{l}}^h}} \right) + {{\bf{G}}_h}{\left( {{{\bf{l}}^h}} \right)^T} = \sigma_h\left( {\underbrace {{{\bf{z}}^h}{\bf{1}}^T + {{\bf{1}}}\left({{\bf{z}}^h}\right)^T}_{{{\mathcal T}_b}} + 2{q_{\Sigma}^h}{\bf{I}}} \right),
\end{equation}
is positive definite \cite{Altman_TAC_2002}, where ${{\bf{z}}^h} = {q_{\Sigma}^h}{\bf{1}} + \left( {b_h  - 1} \right){{\bf{l}}^h}$, ${\bf{1}}$ is a $N \times 1$ vector with every element of 1. This is equivalent to showing that the smallest eigenvalue of this matrix is positive.

After some appropriate calculations \cite{Altman_TAC_2002}, the two non-zero eigenvalues of the matrix ${{\mathcal T}_b}$ in (\ref{def_eq_8}) are given by
\begin{equation}\label{def_eq_9}
\begin{split}
&\eta _{{{\mathcal T}_b}}^1 = \left( {N + 1{\rm{ + }}{b_h}} \right){q_{\Sigma}^h} + \sqrt {N\left({{\bf{z}}^h}\right)^T{{\bf{z}}^h}}  - 2{q_{\Sigma}^h},\\
&\eta _{{{\mathcal T}_b}}^2 = \left( {N + 1{\rm{ + }}{b_h}} \right){q_{\Sigma}^h} - \sqrt {N\left({{\bf{z}}^h}\right)^T{{\bf{z}}^h}} - 2{q_{\Sigma}^h}.
\end{split}
\end{equation}
Note that $N\ge2$ is implicit here.

Since $\eta _{{{\mathcal T}_b}}^1 \ge \eta _{{{\mathcal T}_b}}^2$, the smallest eigenvalue of the matrix ${{\bf{G}}_h}\left( {{{\bf{l}}^h}} \right) + {{\bf{G}}_h}{\left( {{{\bf{l}}^h}} \right)^T}$ can be expressed as
\begin{equation}\label{def_eq_10}
{\eta _{\min }} = \sigma_h \min \left( {\left( {N + 1{\rm{ + }}{b_h}} \right){q_{\Sigma}^h} - \sqrt {N\left({{\bf{z}}^h}\right)^T{{\bf{z}}^h}} ,2{q_{\Sigma}^h}} \right) ,
\end{equation}
where the second $2{q_{\Sigma}^h}$ term in the $\min$ function arises because there are $N-2$ zero eigenvalues in the matrix ${{\mathcal T}_b}$.

To further simplify (\ref{def_eq_10}), we have
\begin{equation}\label{def_eq_11}
\begin{split}
\left({{\bf{z}}^h}\right)^T{{\bf{z}}^h} &= {\sum\nolimits_{n = 1}^N {\left( {{q_{\Sigma}^h} + \left( {{b_h} - 1} \right)q_n^h} \right)} ^2}\\
&\le \left( {N - 1 + {{\left( {{b_h}} \right)}^2}} \right){\left( {{q_{\Sigma}^h}} \right)^2},
\end{split}
\end{equation}

Substituting (\ref{def_eq_11}) into (\ref{def_eq_10}), we obtain
\begin{equation}\label{def_eq_12}
{\eta _{\min }} \ge \sigma_h\min \left( {\kappa_h {q_{\Sigma}^h},2{q_{\Sigma}^h}} \right),
\end{equation}
where $\kappa_h  = \left( {N + 1{\rm{ + }}{b_h}} \right) - \sqrt {N\left( {N - 1 + {{\left( {{b_h}} \right)}^2}} \right)} $.

Since $\sigma_h > 0$, we observe from the right hand side of (\ref{def_eq_12}) that ${\eta _{\min }} > 0$ if $\kappa_h >0$, or equivalently ${\left( {N + 1{\rm{ + }}{b_h}} \right)^2} > N\left( {N - 1 + {{\left( {{b_h}} \right)}^2}} \right)$, which we can re-written as $\left( {1{\rm{ + }}{b_h}} \right)\left( {\left( {N - 1} \right){b_h} - \left( {3N + 1} \right)} \right) < 0$. Thus, a sufficient condition for $\kappa_h > 0$ is
\begin{equation}\label{def_eq_16}
{b_h} < 3 + 4/\left( {N - 1} \right).
\end{equation}
This completes the proof.

\subsection{Proof of Proposition \ref{prop:convergence_alg_1}}\label{appen:proof of prop2}
The proof for the convergence of a general iterative proximal-point algorithm and the corresponding necessary conditions were presented in \cite[Sec. 3]{Kannan_SIAM_2012}. Therefore, we only need to prove that the formulated NEP ${\mathcal G}_{\pmb \lambda}$ meets all the required conditions listed in \cite[Assumption (A3)]{Kannan_SIAM_2012}.

Firstly, it is evident that the set $\mathcal Q$ is compact, and that $\left\| {\bf{q}} \right\|_2$ and $\left\| {\bf F}\left(\bf q\right) \right\|_2$ are both bounded for $\forall {\bf{q}} \in \mathcal Q$. Secondly, as we have proved in Appendix \ref{appen:proof_of_prop1}, the mapping ${\bf F}\left(\bf q\right)$ is strictly monotone on $\bf q$ when the price parameter ${b_h}$ satisfies ${b_h} < 3 + 4/\left( {N - 1} \right)$ for $ \forall h \in {\mathcal H}$. Therefore, we only need to prove that ${\bf F}\left(\bf q\right)$ is Lipschitz continuous over $ {\mathcal Q}$, i.e., show that there exists a real constant ${c_1^{\rm{lip}}} >0$ such that, for all $\bf {q,~s} \in {\mathcal Q}$,
\begin{equation}\label{eq:lc_condition_1}
{\left\| {{\bf{F}}\left( {\bf{q}} \right) - {\bf{F}}\left( {\bf{s}} \right)} \right\|_2} \le {c_1^{\rm{lip}}}{\left\| {{\bf{q}} - {\bf{s}}} \right\|_2} \; .
\end{equation}

According to the definition of Euclidean norm, (\ref{eq:lc_condition_1}) can be shown to hold  if each element of the function ${\bf{F}}\left( {\bf{q}} \right)$, denoted by $f_n^h\left( {\bf{q}} \right) = d{\mathcal B}_n\left( {\bf{q}} \right)/dq_n^h$, is Lipschitz continuous in $\bf q$, i.e., for any $n \in{\mathcal N}$ and any $h \in{\mathcal H}$, there exists a real constant ${c_{n,h}^{\rm{lip}}} >0$ such that, for all $\bf {q,~s} \in {\mathcal Q}$,
\begin{equation}\label{eq:lc_condition_2}
\left| {f_n^h\left( {\bf{q}} \right) - f_n^h\left( {\bf{s}} \right)} \right| \le c_{n,h}^{{\rm{lip}}}{\left\| {{\bf{q}} - {\bf{s}}} \right\|_2}.
\end{equation}

We now proceed to prove (\ref{eq:lc_condition_2}) holds. After some algebraic manipulation, we have
\begin{equation}\label{eq:F_function_element}
f_n^h\left( {\bf{q}} \right) = {p_h}'\left( {q_\Sigma ^h} \right)q_n^h + {p_h}\left( {q_\Sigma ^h} \right) = f_n^h\left( {q_n^h,q_\Sigma ^h} \right).
\end{equation}
Substituting (\ref{eq:F_function_element}) into the left-hand side of (\ref{eq:lc_condition_2}), we have
\begin{equation}\label{eq:lc_condition_3}
\begin{split}
\left| {f_n^h\left( {\bf{q}} \right) - f_n^h\left( {\bf{s}} \right)} \right| =& \left| {f_n^h\left( {q_n^h,q_\Sigma ^h} \right) - f_n^h\left( {s_n^h,s_\Sigma ^h} \right)} \right| \\
\le &\left| {f_n^h\left( {q_n^h,q_\Sigma ^h} \right) - f_n^h\left( {s_n^h,q_\Sigma ^h} \right)} \right|+  \\
&\left| {f_n^h\left( {s_n^h,q_\Sigma ^h} \right) - f_n^h\left( {s_n^h,s_\Sigma ^h} \right)} \right|,
\end{split}
\end{equation}
where $s_\Sigma ^h = \sum\nolimits_{n = 1}^N {s_n^h}$ and the inequality follows according to the triangular inequality.

Now by recalling that ${p_h}\left( {{q_\Sigma ^h}} \right) = a_h {\left( q_\Sigma ^h \right)^{b_h}} + c_h$, we can rewrite the function ${f_n^h\left( {q_n^h,q_\Sigma ^h} \right)} $ as
\begin{equation}\label{eq:lipcond2}
\begin{split}
f_n^h\left( {q_n^h,q_\Sigma ^h} \right) = {a_h}{\left( {q_\Sigma ^h} \right)^{{b_h} - 1}}\left( {{b_h}q_n^h + q_\Sigma ^h} \right) + {c_h}.
\end{split}
\end{equation}

With reference to \cite[Ch. 12]{Eriksson_Mathbook_2004}, it is straightforward to deduce that the function ${f_n^h\left( {q_n^h,q_\Sigma ^h} \right)} $ in (\ref{eq:lipcond2}) is Lipschitz continuous in $q_n^h$ for a fixed $q_\Sigma ^h$ and is Lipschitz continuous in $q_\Sigma ^h$ for a fixed $q_n^h$. That is there exists two real constant $c_{n,h,1}^{\rm{lip}}, c_{n,h,2}^{\rm{lip}}>0$ such that for any $q_n^h$ and $s_n^h$,
\begin{equation}\label{eq:lc_condition_4}
\begin{split}
&\left| {f_n^h\left( {q_n^h,q_\Sigma ^h} \right) - f_n^h\left( {s_n^h,q_\Sigma ^h} \right)} \right| \le c_{n,h,1}^{\rm{lip}}\left| {q_n^h - s_n^h} \right|\\
&~~~~~~~~~~~~~~~~= c_{n,h,1}^{\rm{lip}} \sqrt {{{\left( {q_n^h - s_n^h} \right)}^2}}  \le c_{n,h,1}^{\rm{lip}} {\left\| {{\bf{q}} - {\bf{s}}} \right\|_2},
\end{split}
\end{equation}
and for any $q_\Sigma ^h$ and $s_\Sigma ^h$,
\begin{equation}\label{eq:lc_condition_5}
\begin{split}
&\left| {f_n^h\left( {s_n^h,q_\Sigma ^h} \right) - f_n^h\left( {s_n^h,s_\Sigma ^h} \right)} \right| \le c_{n,h,2}^{\rm{lip}}\left| {q_\Sigma ^h - s_\Sigma ^h} \right|\\
&\le c_{n,h,2}^{\rm{lip}}\sqrt {\sum\nolimits_{n = 1}^N {{{\left( {q_n^h - s_n^h} \right)}^2}} }
\le c_{n,h,2}^{\rm{lip}} {\left\| {{\bf{q}} - {\bf{s}}} \right\|_2} .
\end{split}
\end{equation}

By substituting (\ref{eq:lc_condition_4}) and (\ref{eq:lc_condition_5}) into (\ref{eq:lc_condition_3}), we deduce that it can always find a real constant ${c_{n,h}^{\rm{lip}}} >0$ such that (\ref{eq:lc_condition_2}) holds for all $\bf {q,~s} \in {\mathcal Q}$. This completes the proof.

\subsection{Proof of Proposition \ref{prop:converg_of_algorithm2}}\label{appen:proof of prop3}
The sufficient conditions (i.e., \cite[Ch. 4, Assumptions 8-13]{Koshal_Thesis_2013}) and the rigorous proofs for the convergence of a general distributed agreement-based algorithm have been provided in \cite[Ch. 4.1]{Koshal_Thesis_2013}. Hence, the proof of the Proposition \ref{prop:converg_of_algorithm2} follows if the formulated aggregative game meets all the required conditions listed in \cite[Ch. 4, Assumptions 8-13]{Koshal_Thesis_2013}. Based on the analysis in Lemma \ref{lemma:property_set_function} and Proposition \ref{Prop:unique_NE}, the adopted structure of the weights and the assumption on the step-size, we can claim that the considered aggregative game has already satisfied all the conditions except the one stated in \cite[Ch. 4, Assumptions 10]{Koshal_Thesis_2013}. Therefore, the remaining task is to prove that the formulated game also meets the condition in \cite[Ch. 4, Assumptions 10]{Koshal_Thesis_2013}. More specifically, we need to show that each mapping ${{\bf{F}}_n}\left( {{{\bf{q}}_n},{{\bf{q}}_\Sigma }} \right)$ is Lipschitz continuous in ${{\bf{q}}_\Sigma }$ for every fixed ${\bf q}_n \in {\mathcal Q}_n$.

Analogous to the analysis in Appendix \ref{appen:proof of prop2}, ${{\bf{F}}_n}\left( {{{\bf{q}}_n},{{\bf{q}}_\Sigma }} \right)$ is Lipschitz continuous in ${{\bf{q}}_\Sigma }$ if each element of this function, $f_n^h\left( {{{\bf{q}}_n},{{\bf{q}}_\Sigma }} \right)$, is Lipschitz continuous in ${{\bf{q}}_\Sigma }$. The validity for the Lipschitz continuity of $f_n^h\left( {{{\bf{q}}_n},{{\bf{q}}_\Sigma }} \right)$ follows since $f_n^h\left( {{{\bf{q}}_n},{{\bf{q}}_\Sigma }} \right) = f_n^h\left( {q_n^h,q_\Sigma ^h} \right)$ and ${f_n^h\left( {q_n^h,q_\Sigma ^h} \right)} $ is Lipschitz continuous in $q_\Sigma ^h$ for a fixed $q_n^h$ (cf. Appendix \ref{appen:proof of prop2}). This completes the proof.

\end{appendix}
\section{Acknowledgement}
The authors would like to thank the anonymous reviewers for
their valuable comments and suggestions, which improved the
quality of the paper. The authors also thank Dr. Gregor Verbic,
Prof. David Hill, Dr. Archie Chapman and Dr. Peng Wang for their helpful discussion.

\ifCLASSOPTIONcaptionsoff
  \newpage
\fi

\bibliographystyle{IEEEtran}
\bibliography{endnote}

\begin{IEEEbiography}[{\includegraphics[width=1in,height=1.25in,clip,keepaspectratio]
{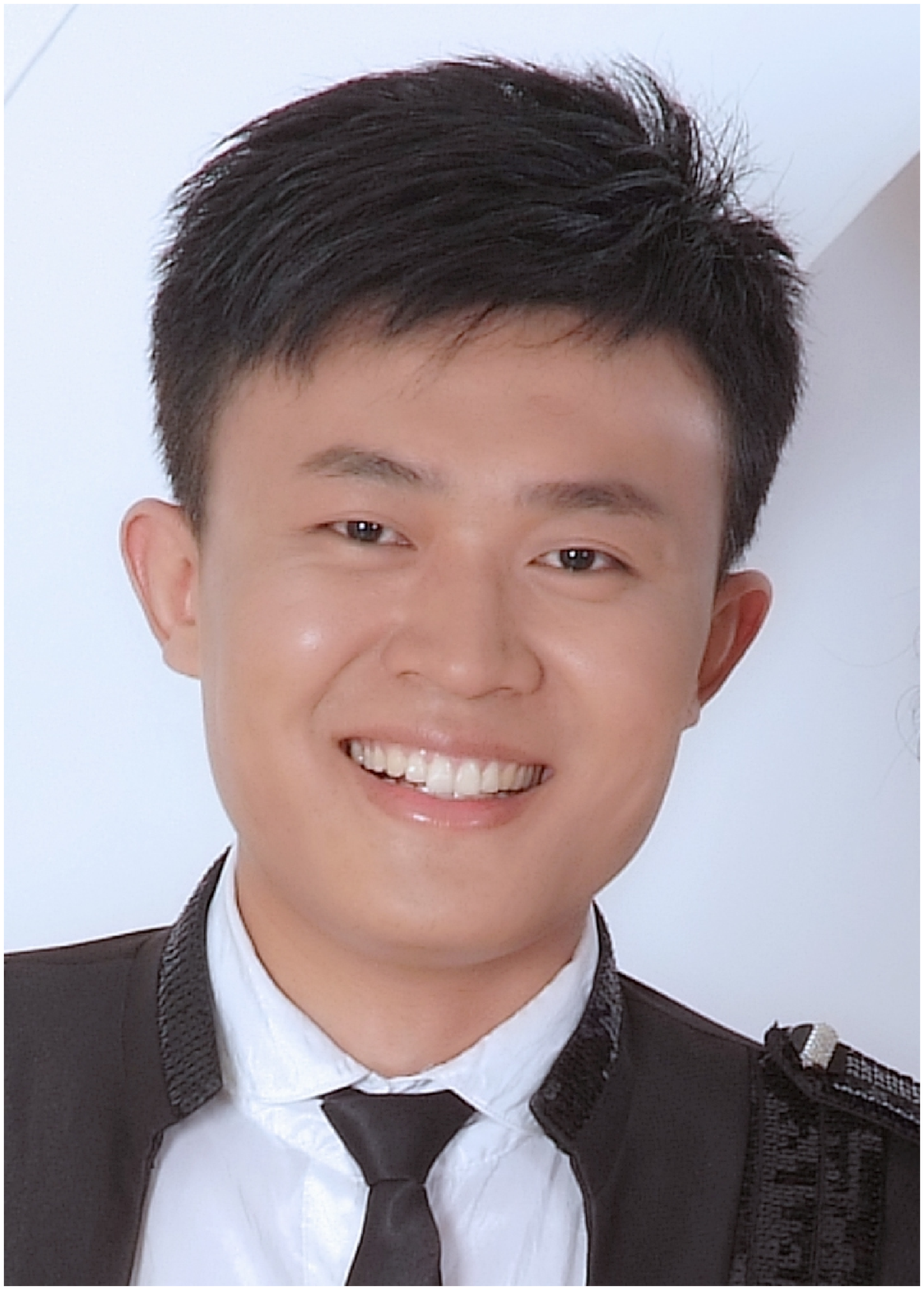}}]{He (Henry) Chen}
(S'10) received his B.E. degree in Communication Engineering and M.E. degree (research) in Communication and Information System both from Shandong University, China, in 2008 and 2011, respectively. He was awarded the Outstanding Bachelor Thesis of Shandong University in 2008 and the Outstanding Master Thesis of Shandong Province in 2012. He is currently working towards the Ph.D. degree in electrical engineering at the University of Sydney, Sydney, Australia.

His current research interests include demand side management of smart grid, wireless communications powered by wireless energy transfer and the applications of game theory, optimization theory, as well as varational inequality theory in these areas. His research is supported by International Postgraduate Research Scholarship (IPRS), Australian Postgraduate Award (APA), and Norman I Price Supplementary Scholarship.
\end{IEEEbiography}
\begin{IEEEbiography}[{\includegraphics[width=1in,height=1.25in,clip,keepaspectratio]
{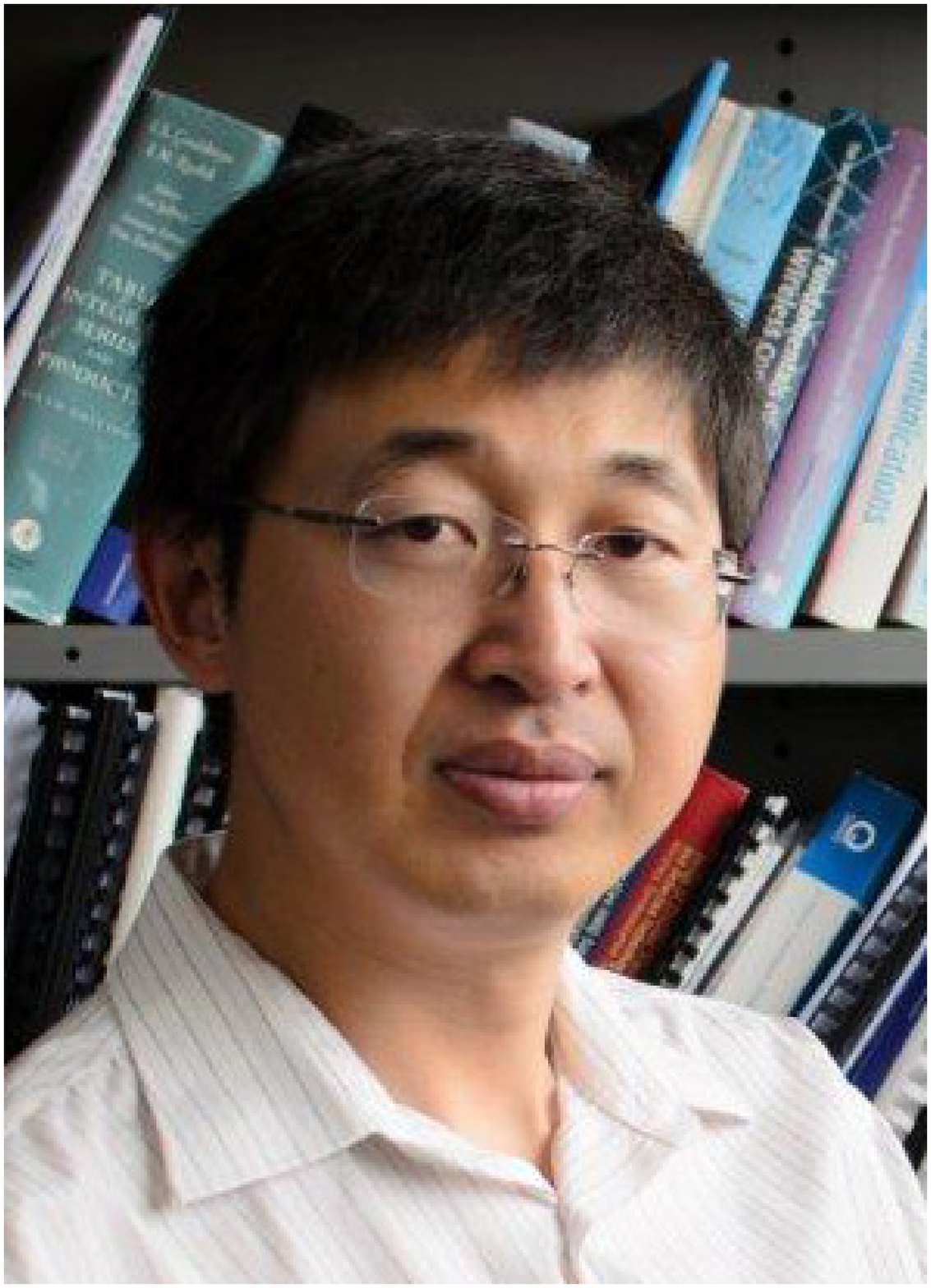}}]{Yonghui Li}
(M'04-SM'09) received his Ph.D. degree in November 2002 from Beijing University of Aeronautics and Astronautics. From 1999-2003, he was affiliated with Linkair Communication Inc, where he held a position of project manager with responsibility for the design of physical layer solutions for the LAS-CDMA system. Since 2003, he has been with the Centre of Excellence in Telecommunications, the University of Sydney, Australia. He is now an Associate Professor in School of Electrical and Information Engineering, University of Sydney. He was the Australian Queen Elizabeth II Fellow and is currently the Australian Future Fellow.

His current research interests are in the area of wireless communications, with a particular focus on machine-to-machine communications, cooperative communications, coding techniques and wireless sensor networks. He holds a number of patents granted and pending in these fields. He is an executive editor for European Transactions on Telecommunications (ETT). He has also been involved in the technical committee of several international conferences, such as ICC, Globecom, etc.
\end{IEEEbiography}
\begin{IEEEbiography}[{\includegraphics[width=1in,height=1.25in,clip,keepaspectratio]
{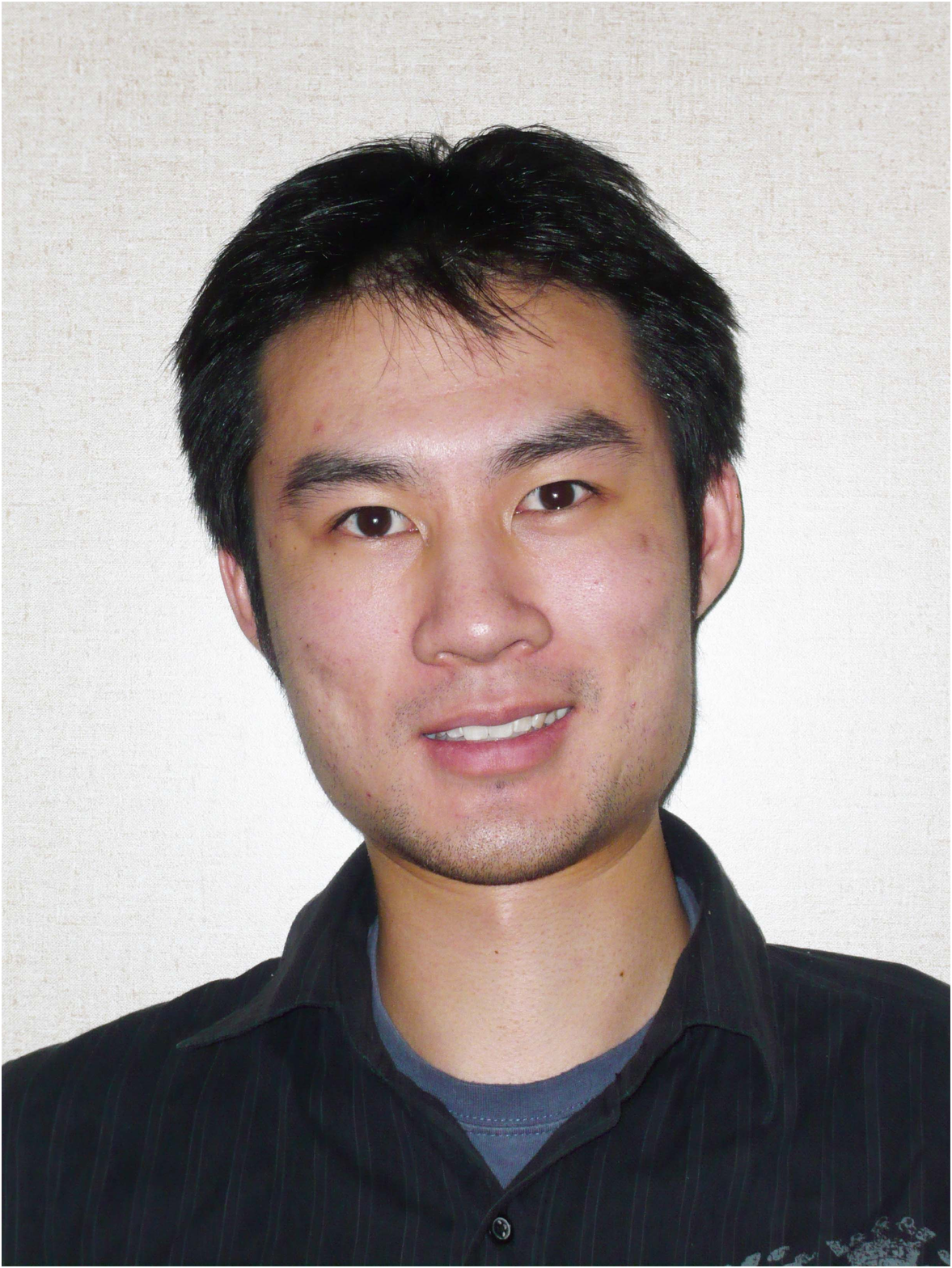}}]{Raymond H. Y. Louie}
(S'06-M'10) received the combined B.E. degree in electrical engineering and
B.Sc. degree in computer science from the University of New South Wales, Sydney, Australia, in 2006 and the Ph.D. degree in electrical engineering from the University of Sydney, Australia, in 2010. He was then an ARC Australian Postdoctoral Fellow at the University of Sydney, and is now a Visiting Assistant Professor at the Hong Kong University of Science and Technology. His research interests include biomedical engineering, cognitive radio, ad hoc networks, MIMO systems, cooperative communications, network coding, and multivariate statistical theory. Dr. Louie was awarded a Best Paper Award at IEEE Globecom 2010.
\end{IEEEbiography}
\begin{IEEEbiography}[{\includegraphics[width=1in,height=1.25in,clip,keepaspectratio]
{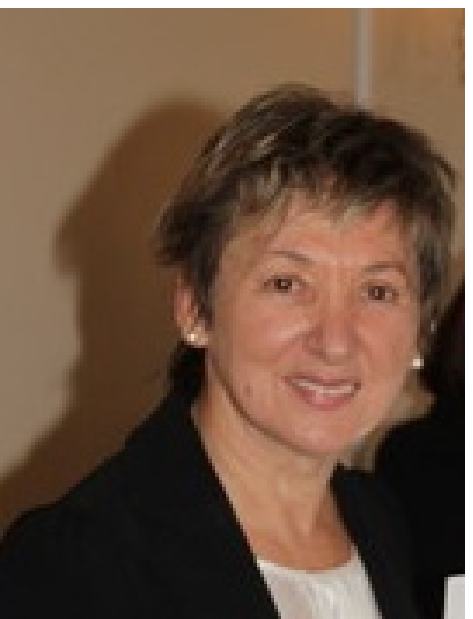}}]{Branka Vucetic}
(SM'00-F'03) currently holds the Peter Nicol Russel Chair of Telecommunications Engineering at the University of Sydney and serves the Director of Centre of Excellence in Telecommunications. She is an internationally recognized expert in wireless communications and coding. She has published more than three hundred research papers and co-authored four books in telecommunications and coding theory.

Prof. Vucetic is an IEEE Fellow. Her most significant research contributions have been in the field of channel coding and its applications in wireless communications. The research of Prof. Vucetic has involved collaborations with industry and government organisations in Australia and several other countries.
\end{IEEEbiography}
\end{document}